\documentclass[twocolumn]{autart}  

\usepackage[utf8]{inputenc}
\usepackage[english]{babel}
\usepackage{graphicx,overpic,tikz}     
\usepackage{amsmath}
\usepackage{amssymb}
\usepackage{color}
\usepackage{float}
\usepackage{mathrsfs}
\usepackage{csquotes}
\usepackage{hyperref}
\usepackage{subfigure} %%I have added this package

\usepackage{pgfplots}
%\usepackage{fixltx2e}

% save and then undefine the conflicting command with the package amsthm
% we need \makeatletter because \@undefined uses the special @ character.
\makeatletter
\let\theoremstyle\@undefined                        % undefine \theoremstyle
\makeatother

\usepackage{amsthm}

%%%%%%
\usepackage{mathptmx}
\interdisplaylinepenalty=2500
%%%

%\newenvironment{theorem}
%{\begin{nntheorem}\it}
%{\end{nntheorem}}
%\newenvironment{proposition}
%{\begin{nnproposition}\it}
%{\end{nnproposition}}
%\newenvironment{lemma}
%{\begin{nnlemma}\it}
%{\end{nnlemma}}
%\newenvironment{corollary}
%{\begin{nncorollary}\it}
%{\end{nncorollary}}
%\newenvironment{definition}
%{\begin{nndefinition}\it}
%{\end{nndefinition}}
%\newenvironment{assumption}
%{\begin{nnassumption}\it}
%{\end{nnassumption}}
%\newenvironment{example}
%{\begin{nnexample}\rm}
%{\end{nnexample}}
%\newenvironment{nnexample}
%{\begin{nexample}\rm}{\end{nexample}}
%\newenvironment{proof}
%{\begin{nnproof}}{\end{nnproof}}

%\newtheorem{nnremark}{\bf Remark}
%\newenvironment{remark}{\begin{nnremark}}{\hfill \hspace*{1pt}\hfill $\circ$\end{nnremark}}

%\newenvironment{proof}{{\bf Proof.}}{\hfill \hspace*{1pt}\hfill $\bullet$}

%\def\red{}

\def\epsilon{\varepsilon}

     % High-gain matrix
  % Saturating function
            % upperbound on the estimate
            % given sets of initial conditions
  % Time domain of a hybrid solution
        % Upper bound of the time derivative of V
     % Class K functions
          % Stability margin

\newtheorem{Theo}{Theorem}

\newtheorem{Lem}{Lemma}

\sloppy
\usepackage{floatflt}

\begin{document}
%\sloppy
\begin{frontmatter}

\title{ Simultaneous compensation of input delay and state/input quantization
	for linear systems via switched predictor feedback} % Title, preferably not more 
                                                % than 10 words.

\thanks[footnoteinfo]{	Funded by the European Union (ERC, C-NORA, 101088147). Views and opinions expressed are however those of the authors only and do not necessarily reflect those of the European Union or the European Research Council Executive Agency. Neither the European Union nor the granting authority can be held responsible for them.}
\author[TUC]{Florent Koudohode}   \ead{ fkoudohode@tuc.gr}  and   % Add the 
\author[TUC]{Nikolaos Bekiaris-Liberis} \ead{bekiaris-liberis@ece.tuc.gr}             % e-mail address 

\address[TUC]{Department of Electrical and Computer Engineering, Technical University of Crete, 73100 Chania, Greece}

\begin{keyword}                           % Five to ten keywords,  
 Predictor feedback, quantization, input delay, ODE-hyperbolic PDE cascade,  hybrid control. 
\end{keyword}

\begin{abstract}                          % Abstract of not more than 200 words.
We develop a switched predictor-feedback law, which achieves global asymptotic stabilization of linear systems with input delay and with the plant and actuator states available only in (almost) quantized form. The control design relies on a quantized version of the nominal predictor-feedback law for linear systems, in which quantized measurements of the plant and actuator states enter the predictor state formula. A switching strategy is constructed to dynamically adjust the tunable parameter of the quantizer (in a piecewise constant manner), in order to initially increase the range and subsequently decrease the error of the quantizers. The key element in the proof of global asymptotic stability in the supremum norm of the actuator state is derivation of solutions'  estimates combining a backstepping transformation with small-gain and input-to-state stability arguments, for addressing the error due to quantization. We extend this result to the input quantization case and illustrate our theory with a numerical example.
\end{abstract}

\end{frontmatter}
\section{Introduction}
Compensation of long input delays for linear systems can be achieved via predictor-based control design techniques and, in particular, via exact predictor feedbacks, see, for example, \cite{artstein1982linear,bekiaris2013nonlinear,deng2022predictor,krstic2009delay}. The baseline, continuous predictor-based designs are accompanied with certain stability (and robustness) guarantees, see, e.g., \cite{bekiaris2013nonlinear,bresch2014prediction,karafyllis2017predictor,krstic2009delay}. However, implementation of predictor feedbacks may be subject to digital effects, such as, for example, sampling and quantization, which may deteriorate closed loop performance of the nominal continuous designs, when these effects are left uncompensated, see, for example, \cite{karafyllis2017predictor,mazenc2016predictor}. Therefore, addressing issues arising due to digital implementation of predictor feedbacks is practically and theoretically significant, in order to provably preserve the stability guarantees of the original, continuous designs. 

Among the potential digital implementation issues arising in implementation of predictor feedbacks, sampled measurements and control inputs applied via zero-order hold have been addressed in \cite{karafyllis2011nonlinear,selivanov2016observer,selivanov2016predictor}; while \cite{battilotti2019continuous,weston2018sequential} address sampling in sequential predictors-based designs and \cite{di2019sampled,di2022semi} address sampling in nonlinear systems with state delay. In particular, \cite{karafyllis2011nonlinear,selivanov2016observer,selivanov2016predictor} introduce design and analysis approaches for compensating the effect of sampling in measurements and actuation employing predictor-based designs; while \cite{di2020practical} also addresses quantization effects in nonlinear systems with delay on the state. Because our design also relies on a switched strategy (although it is assumed that the input is continuously applied and that continuous measurements are available), results on predictor-based event-triggered control design may be also viewed as relevant \cite{gonzalez2019event,mazenc2016predictor,mazenc2022event,nozari2020event,sun2022predictor}. Besides \cite{bekiaris2020hybrid} that considers the case of a class of boundary controlled, first-order hyperbolic PDEs (Partial Differential Equations) subject to state quantization, not involving an ODE (Ordinary Differential Equation) part, paper \cite{schlanbusch2022attitude} presents a predictor-feedback design for a particular case of a model of robot manipulator with quantized input, which, however, neither addresses a general linear system with a systematic design and analysis approach nor aims at achieving asymptotic stability via a dynamic quantizer, while quantization affects the control input and not the state measurements.  

In the present paper, we develop a switched predictor-feedback control design that achieves compensation of both input delay and state (or input) quantization. The control design relies on two main ingredients—a baseline predictor-feedback design and a switching strategy that dynamically adjusts the tunable parameter of the quantizer. In particular, the nominal predictor state formula is modified such that the plant and actuator states are replaced by their quantized versions; while the tunable parameter of the quantizer is dynamically adjusted (in a piecewise manner), in order to initially increase the range of the quantizer and to subsequently decreases its error, as it is done in the case of delay-free systems in \cite{brockett2000quantized,liberzon2003hybrid}. The transition between the two main modes of operation of the switching signal takes place at detection of an event indicating that the infinite-dimensional state of the system (consisting of the ODE and actuator states) enters the range of the quantizer, implying that stabilization can be then achieved decreasing the quantization error. 

We establish global asymptotic stability in the supremum norm of the actuator state. The proof strategy relies on combination of backstepping \cite{krstic2009delay} with small-gain and input-to-state stability (ISS) arguments \cite{karafyllis2021input}, towards derivation of estimates on solutions. In particular, the proof consists of two main steps. In the first, the system operates in open loop, while the range of the quantizer is increasing. Deriving estimates based on the explicit solution of the open-loop system, it is shown that there exists some time instant at which the state enters within the range of the quantizer. In the second step of the proof, given that the state is within the quantizer's range, the tunable parameter of the quantizer is decreasing in a piecewise constant manner. In particular, within each interval in which the quantizer's parameter is constant, we show that the norm of the solutions of the closed-loop system decreases by a factor that is less than unity. To show this we capitalize on the input-to-state stability properties of linear predictor feedbacks and of the pure-transport PDE system, which allow us to obtain, via a small-gain argument, a condition on the quantizer's parameters, namely, on its range and error, which guarantees that asymptotic stability is achieved. We note that the quantizers considered here are called \enquote{almost} quantizers. The reason is that we use functions that are locally Lipschitz and not just piecewise constant functions to avoid issues related to existence and uniqueness of solutions\footnote{Arising due to the potential non-measurability of composition of an exact quantizer function with an only continuous PDE state.}. This assumption allows us to study existence and uniqueness using the results from \cite{karafyllis2021input} in combination with \cite{espitia2017stabilization,KKnonlocal}, while the stability estimates derived and the overall proof strategy adopted do not depend on this property, suggesting that such an assumption can be removed. 

We start in Section~\ref{probFormulation} presenting the classes of systems and quantizers considered, together with the switched predictor-feedback design. In Section~\ref{stabStateQuantization} we establish global asymptotic stability of the closed-loop system in the state quantization case and this result is extended to the input quantization case in Section~\ref{inputquantization}. In Section~\ref{simulation} we present consistent simulation results. In Section~\ref{conlusion} we provide concluding remarks and a discussion on potential future research extensions. 

{\em Notation:} We denote by $L^{\infty}(A ; \Omega)$ the space of measurable and bounded functions defined on $A$ and taking values in $\Omega$. For a given $D>0$ and a function  $u \in L^{\infty}([0, D] ; \Rset)$ we define $\|u\|_{\infty}=\sup _{x \in[0, D]}|u(x)|$.  For a given $h \in \Rset$ we define its integer part as $\lfloor h\rfloor=\max \{k \in \Zset: k < h\}$. The state space $\Rset^n\times L^{\infty}([0, D] ; \Rset) $ is induced with norm $\|(X,u)\|=|X|+\| u\|_{\infty}.$ We denote by $AC\left(\mathbb{R}_{+}, \mathbb{R}^{n}\right),$ the set of all absolutely continuous function $X:\mathbb{R}_+\to \mathbb{R}^n.$ Let $I \subseteq \mathbb{R}$ be an interval. A piecewise left-continuous function (resp. a piecewise right-continuous function) $f : I \rightarrow J$ is a function continuous on each closed interval subset of $I$ except possibly on a finite number of points $x_0 < x_1 < \cdot \cdot \cdot < x_p$ such that for all $l \in \{0, \cdot \cdot \cdot, p - 1\}$ there exists $f_l$ continuous on $[x_l, x_{l+1}]$ and $f_l |_{(x_l, x_{l+1})} = f|_{(x_l, x_{l+1})}$. Moreover, at the points $x_0, \cdot \cdot \cdot, x_p$ the function is left continuous. The set of all piecewise left-continuous functions is denoted by $\mathcal{C}_{lpw} (I, J)$ (see also \cite{espitia2017stabilization,KKnonlocal}).
% A function $f: I \rightarrow \mathbb{R}$ is called piecewise continuous on $I$ if for every compact set $K \subseteq I$, the number of points $t \in K$ where $f$ is discontinuous is finite, and furthermore, for every $t \in I$ all meaningful limits $\lim_{h \to 0^+} f(t+h)$, $\lim_{h \to 0^-} f(t+h)$ exist and are finite.

\section{ Problem Formulation and Control Design}\label{probFormulation}
\subsection{ Linear Systems With Input Delay \& State Quantization}
We consider the following system 
\begin{equation} \label{linear_delay_system}
	\dot{X}(t)=A X(t)+B U(t-D),
\end{equation}
where $D>0$ is constant input delay, $t \geq 0$ is time variable, $X \in \Rset^{n}$ is state, and $U$ is scalar control input. An alternative representation of this system is as follows 
\begin{align}
	\label{pde_representation}	\dot{X}(t)&=A X(t)+B u(0,t),\\
	\label{pde_representation01}	u_{t}(x, t)&=u_{x}(x,t), \\
	\label{pde_representation1}	u(D, t)&=U(t),
\end{align}
by setting $u(x, t)=U(t+x-D),$ where $ x\in [0, D]$ and $u$  is the transport PDE state, with initial conditions $u(x,0) = u_0 (x)$. 
We proceed from now on with representation \eqref{pde_representation}--\eqref{pde_representation1} as it turns out to be more convenient for control design and analysis. In \cite{krstic2009delay} system \eqref{pde_representation}--\eqref{pde_representation1} is transformed into 
\begin{align}\label{targetsystem_without_quantizer}
	\dot{X}(t)&=(A+B K) X(t)+Bw(0,t), \\
	\label{targetsystem_without_quantizer2}	w_{t}(x, t)&=w_{x}(x, t), \\
	w(D, t)&=U(t)-U_{\rm nom}(t),\label{targetsystem_without_quantizer3}
\end{align} thanks to the backstepping transformation
\begin{align}\label{backstepping_direct_transformation}
	w(x, t)&=u(x,t)-K\int_{0}^{x} e^{A(x-y)}Bu(y,t) d y-Ke^{A x} X(t),
\end{align}% with $q(x, y)=K e^{A(x-y)}$ and $\gamma(x)^{\top}=Ke^{A x}.$ 
where $U_{\rm nom}(t)$ is the nominal predictor feedback defined as follows \begin{align}\label{nominalU}
	U_{\rm nom}(t)&=K\int_{0}^{D} e^{A(D-y)}Bu(y,t)dy+Ke^{A D} X(t).
\end{align}
The inverse of this transformation is 
\begin{align}\label{backstepping_inverse_transformation}
	u(x,t)&=w(x,t)+K\int_{0}^{x} e^{(A+BK)(x-y)} B w(y, t) d y\\
	\nonumber&+Ke^{(A+BK) x} X(t).
\end{align} %with $	k(x, y)=Ke^{(A+BK)(x-y)} B$ and $N(x)=Ke^{(A+BK) x}$.
One has  \begin{equation}\label{equivalence}
	M_2\|(X,u)\|\le \|(X,w) \|\le M_1\|(X,u)\|,
\end{equation}
where $M_1,M_2$ are 
\begin{align}
	\label{M1}	M_{1} & =|K| \mathrm{e}^{|A| D} \max \{1, D|B|\}+1, \\
	\label{M2}	M_{2} & =\frac{1}{|K| \mathrm{e}^{|A+B K| D} \max \{1, D|B|\}+1}. 
\end{align}%\begin{equation}\label{constant_M1}
%	M_1=\displaystyle\max\Big\{ 1+D \max _{0 \leqslant x \leqslant 1}\max _{0 \leqslant x \leqslant y}|q(x, y)|,|K|e^{|A+BK||D|} \Big\}
%\end{equation}
%\begin{equation}\label{constant_M2}
%	M_2=\dfrac{1}{
	%		\displaystyle\max\Big\{ 1+D \max _{0 \leqslant x \leqslant 1}\max _{0 \leqslant x \leqslant y}|k(x, y)|,Ke^{|A||D|} \Big\}
	%	}
%\end{equation}
Although \eqref{backstepping_direct_transformation}--\eqref{M2} are well-known facts, we present them here as the constants $M_1$ and $M_2$ are incorporated in the control design.
\subsection{Properties of the Quantizer}
The state $X$ of the plant and the actuator state $u$ are available only in quantized form. We consider here dynamic quantizers with an adjustable parameter of the form (see, e.g., \cite{brockett2000quantized,liberzon2003hybrid})
\begin{equation}\label{quantizer}
	q_{\mu}(X,u)=( q_{1\mu}(X),q_{2\mu}(u))=\left(\mu q_1\left(\frac{X}{\mu}\right),\mu q_2\left(\frac{u}{\mu}\right)\right),
\end{equation} where $\mu>0$ can be manipulated and this is called \enquote{zoom} variable. The quantizers $q_1:\mathbb{R}^n\to \mathbb{R}^n$ and $q_2:L^{\infty}([0,D];\mathbb{R})\to L^{\infty}([0,D];\mathbb{R})$ are locally Lipchitz functions that satisfy the following properties

P1: If $\|(X,u)\| \leq M$, then $\|(q_1(X)-X, q_2(u)-u)\| \leq \Delta$,\\
P2: If $\|(X,u)\|>M$, then $\|(q_1(X),q_2(u))\|>M-\Delta$,\\
P3: If $\|(X,u)\| \leq \hat{M}$, then $q_1(X)=0$ and $q_2(u)=0,$ 

for some positive constants $M, \hat{M}$, and $\Delta$, with $M>\Delta$ and $\hat{M}<M$. When the argument of the employed quantizer is a vector, the quantizer function is a vector itself, defined componentwise according to \eqref{quantizer}, satisfying properties $\rm P1$--$\rm P3$. In the present case, we consider uniform quantizers for each element of the vector argument, while we discern between the quantizer function of the measurements of the plant state $X$ and the function that corresponds to the actuator state measurements $u$. For simplicity of control design and analysis we assume a single tunable parameter $\mu$. In fact, this is also practically reasonable, considering a case where, e.g., a single computer with a single camera collects measurements.
%\begin{center}
%	\includegraphics[width=8cm]{quantizer}
%%	\title{An approximate quantizer with $\epsilon$-layer. }
%\end{center}
\begin{figure}
	\begin{center}
		\includegraphics[width=8cm]{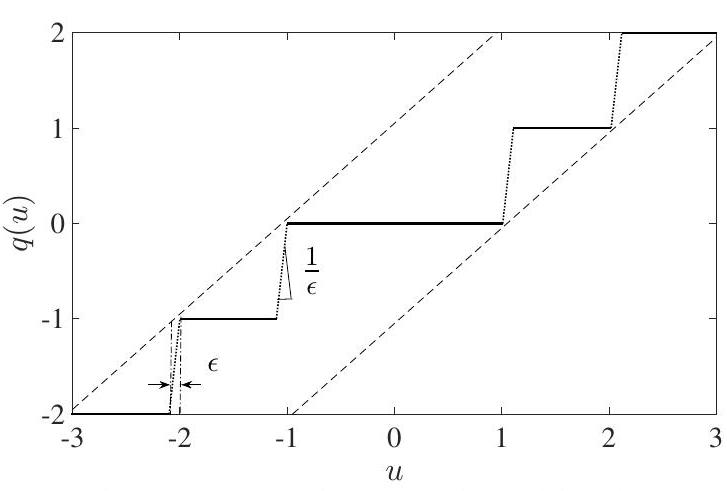}
		\caption{An approximate quantizer with $\epsilon$-layer.} 
		\label{an_approximate_quantizer}                                  
	\end{center}                                 
\end{figure}
The quantizers considered here differ from typical piecewise constant quantizers, taking finitely many values, in that we assume they are locally Lipschitz functions. Although this may appear as a restrictive requirement, in practice, it isn't as it is also illustrated in Figure~\ref{an_approximate_quantizer}, which shows a locally Lipschitz quantizer that may arbitrarily closely approximate a quantizer with rectilinear quantization regions. This is a technical requirement to guarantee existence and uniqueness of solutions in a straightforward manner. The stability result obtained and the stability proof do not essentially rely on this, suggesting that this assumption could be removed.

% In fact, the stability analysis of the closed-loop system would be independent of the presence of the $\epsilon$-layer, suggesting that this conditions is a merely technical conditions, guaranteeing existence and uniqueness of solutions, and thus, the presented control design approach could be applied to the case of piecewise constant quantizers. This is further demonstrated in the simulation results section.
\subsection{Predictor-Feedback Law Using Almost Quantized Measurements}
The hybrid predictor-feedback law can be viewed as a quantized version of the predictor-feedback controller \eqref{nominalU}, in which the dynamic quantizer depends on a suitably chosen piecewise constant signal $\mu$. It is defined as \begin{equation}\label{control_quantizer}
	U(t)=\left\{\begin{array}{ll}0, & 0 \leq t \leq t_{0} \\ K P_{\mu(t)}\left(X(t), u(\cdot,t)\right), & t>t_{0}
	\end{array}\right.,
\end{equation} where\begin{equation}\label{predictor_quantizer}
	P_{\mu}(X, u)=\mathrm{e}^{A D} q_{1\mu}(X)+\int_{0}^{D} \mathrm{e}^{A (D-y)} B q_{2\mu}(u(y)) dy,
\end{equation} and $K$ is a gain vector that makes matrix $A+B K$ Hurwitz. The tunable parameter $\mu$ is selected as 
\begin{equation}\label{switching_parameter}
	\mu(t)= \begin{cases}\overline{M}_1 \mathrm{e}^{2|A| j \tau} \mu_{0}, & (j-1) \tau \leq t \leq j \tau, \\ & 1 \leq j \leq\left\lfloor\frac{t_{0}}{\tau}\right\rfloor+1,\\ \mu\left(t_{0}\right), & t \in\left(t_{0}, t_{0}+T\right], \\ \Omega \mu\left(t_{0}+(i-1) T\right), & t \in\left(t_{0}+(i-1) T,\right. \\ & \left.t_{0}+i T\right], \quad i=2,3, \ldots\end{cases},
\end{equation} for some fixed, yet arbitrary, $\tau, \mu_0>0$, with $t_{0}$ being the first time instant at which the following holds 
\begin{align}
	\nonumber&\left|\mu(t_0)q_1\left(\frac{X\left(t_{0}\right)}{\mu\left(t_{0}\right)}\right)\right|+\left\|\mu(t_0)q_2\left(\frac{u\left( t_{0}\right)}{\mu\left(t_{0}\right)}\right)\right\|_{\infty}\\
	&	\label{first_time_t0}\leq (M \overline{M}- \Delta) \mu(t_0),
\end{align} where
\begin{align}
	%	\label{M1}	M_{1} & =|K| \mathrm{e}^{|A| D} \max \{1, D|B|\}+1 \\
	%	\label{M2}	M_{2} & =\frac{1}{|K| \mathrm{e}^{|A+B K| D} \max \{1, D|B|\}+1} \\
	\label{M3}	M_{3} & =|K| \mathrm{e}^{|A| D}(1+|B| D), \\
	\label{M}	\overline{M} & =\frac{M_{2}}{M_1(1+M_0)}, \\
	\label{Mbar} \overline{M}_1&=1+D|B|,\\
	\label{Omega}	\Omega & =\frac{(1+\lambda) (1+M_0)^2 \Delta M_{3}}{M_{2} M}, \\
	\label{T}	T & =-\frac{\ln \left(\frac{\Omega}{1+M_0} \right)}{\delta}.
\end{align} The parameters $\delta, \lambda$ and $M_0$ are defined as follows. Constant $\delta \in(0, \min\{\sigma,\nu\}),$ for some $\nu,\sigma>0,$ satisfying
\begin{equation} \label{expABK}
	\left|e^{(A+B K) t}\right| \leq M_{\sigma} e^{-\sigma t}, 
\end{equation} for some $M_{\sigma}>1$, $\lambda$ is selected large enough in such a way that the following small-gain condition holds
\begin{equation}\label{small_gain}
	\frac{e^{D}}{1+\lambda}\left(\frac{M_{\sigma}}{\sigma}|B|+1\right)<1,	
\end{equation} and $M_0$ is defined such that
\begin{align}
	\nonumber& M_{0}=\max \left\{\left(1-\phi\right)^{-1}\left(1-\varphi_{1}\right)^{-1}e^D ;\right. \\
	\nonumber	&\left.\left(1-\phi\right)^{-1} \left(1-\varphi_{1}\right)^{-1}\phi M_{\sigma}\right\}+ \max \left\{(1-\varphi_1)^{-1}M_{\sigma}; \right. \\ 	\label{M0}&\left.(1+\varepsilon) \left(1-\phi\right)^{-1}(1-\varphi_1)^{-1}e^D\frac{M_{\sigma}}{\sigma}|B|\right\},
\end{align} where $0<\phi<1$ and $0<\varphi_{1}<1$ with
\begin{align} \label{phi and psy}
	\phi=\frac{1+\varepsilon}{1+\lambda} e^{D(\nu+1)} \text{ and }\varphi_{1}=(1+\varepsilon)(1-\phi)^{-1} \phi \frac{M_{\sigma}}{\sigma}|B|,
\end{align} for some  $\varepsilon>0$.  The choice of $\nu,\varepsilon$ guarantees that $\phi<1,\varphi_1<1$, which is always possible given \eqref{small_gain} (see also the proof of Lemma~\ref{Lemma2} in Section~\ref{stabStateQuantization}). 

We note here few remarks for the control law presented. Event \eqref{first_time_t0} can be detected using measurements of $q_{1\mu}(X), q_{2\mu}(u)$, and $\mu$ only, which are available. The tunable parameter $\mu$ is chosen in a piecewise constant manner. In the first phase (for $0\le t\le t_0$) it is increasing sufficiently fast, choosing a sufficiently large $\overline{M}_1$, so that there exists a time $t_0$ such that \eqref{first_time_t0} holds (see Lemma~
\ref{Lemma1}) which depends on the size of open-loop solutions. In the zooming in phase, $\mu$ is decreasing by a factor of $\Omega$ over $T$ time units. To guarantee convergence of the state to zero, $\Omega$ has to be less than one, which is guaranteed by assumption (see Theorem~\ref{Theorem1}). Time $T$ has to be chosen large enough, as $T$ time units intervals correspond to intervals in which the solutions approach the equilibrium. Condtion \eqref{small_gain} is a small-gain condition derived when viewing as disturbance the error due to quantized measurements and applying an ISS argument (see Lemma~
\ref{Lemma2}). In particular, when $D$ increases, $\lambda$ has to increase, which (via $\Omega$ in \eqref{Omega}) imposes stricter conditions on the ratio $\frac{\Delta}{M}$ between error and range of the quantizer.
%$u_{0}(0)=\varphi\left(d(0),X_{0}, u_{0}\right)$
%	then for all $X_0 \in$ $\Rset^{n}$ and $u_{0} \in L^{\infty}([0,D] ; \Rset)$ there exists a unique solution such that $X(t)$ is absolutely continuous and $u(\cdot,t) \in$ $L^{\infty}([0, D] ; \Rset)$, which satisfies	for $t\ge 0$
\section{Stability of Switched Predictor-Feedback Controller Under State Quantization}\label{stabStateQuantization}
\begin{Theo}\label{Theorem1}
	Consider the closed-loop system consisting of the plant \eqref{pde_representation}--\eqref{pde_representation1} and the switched predictor-feedback law \eqref{control_quantizer}--\eqref{switching_parameter}. Let the pair $(A, B)$ be stabilizable. If $\Delta$ and $M$ satisfy  \begin{equation}\label{conditionMDelta}
		\frac{\Delta}{M}<\frac{M_2}{(1+M_0)\max \{M_3(1+\lambda)(1+M_0),2M_1\}},
	\end{equation}	then for all $X_{0} \in \mathbb{R}^{n}$, $u_{0} \in \mathcal{C}_{lpw}([0, D], \mathbb{R})$, there exists a unique solution such that $X(t) \in AC\left(\mathbb{R}_{+}, \mathbb{R}^{n}\right)$, for each $t \in \mathbb{R}_{+}$ $u(\cdot, t) \in \mathcal{C}_{lpw}([0,D], \mathbb{R})$, and for each $x \in[0,D]$ $u(x,\cdot) \in \mathcal{C}_{\text {lpw }}\left(\mathbb{R}_{+}, \mathbb{R}\right)$, which satisfies
	%$u \in \mathcal{C}_{lpw}([0, D], \mathbb{R})$ and  $u(x,\cdot)\in \mathcal{C}_{rpw}(\mathbb{R}_{+}-\{t_{i}, i=0,1,2, \ldots\}, \mathbb{R})$.
	\begin{align}
		&|X(t)|+\left\|u(t)\right\|_{\infty}
		\label{stability_result_u} \leq  \gamma\left(|X_0|+\left\|u_{0}\right\|_{\infty}\right)^{\left(2-\frac{\ln \Omega}{T} \frac{1}{|A|}\right)} \mathrm{e}^{\frac{\ln \Omega}{T}t},
	\end{align}	where
	\begin{align}
		\nonumber	\gamma&=\frac{\overline{M}_1}{M_2}\max \left\{\frac{M_{2}M}{\Omega} e^{2|A| \tau} \mu_{0}, M_1\right\} \\
		\nonumber&
		\times \max \left\{\frac{1}{\mu_0(M \overline{M}-2 \Delta)}, 1\right\}\\
		&\times\left(\frac{1}{\mu_0(M \overline{M}-2 \Delta)}\right)^{\left(1-\frac{\ln \Omega}{T} \frac{1}{|A|}\right)}.
	\end{align} 
	%\begin{align}
	%	&|X|+\|u\|_{\infty} \leq \max \left\{\overline{M}_{3}, 1\right\}\overline{M}_{2} \overline{M}_{3}^{\left(1+\frac{1}{|A|}(\delta-\frac{\ln(1+M_0)}{T})\right)}\\&\times\left(1+M_0\right)^{\frac{1}{T}}\left(\left|X_{0}\right|+\|u_{0}\|_{\infty}\right)^{\left(2+\frac{1}{|A|}(\delta-\frac{\ln(1+M_0)}{T})\right)}  e^{-\delta t} .
	%\end{align}	
	
	%with $\mu^0=\frac{\left(\max \left\{\mu_{1}^{0}, \mu_{2}^{0}\right\}\right)^2 }{\min\left\{\mu_{1}^{0}, \mu_{2}^{0}\right\} }.$
\end{Theo}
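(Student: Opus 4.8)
The plan is to follow the two-phase structure that the paper has already foreshadowed in its statement of the switching signal \eqref{switching_parameter}. In the \textbf{open-loop (zooming-out) phase}, $0\le t\le t_0$, the control input is $U\equiv 0$, so the actuator state $u$ is simply transported and the ODE evolves as $\dot X = AX + Bu(0,t)$. Using the explicit solution of this cascade one bounds $|X(t)|+\|u(\cdot,t)\|_\infty$ by $\overline M_1 e^{2|A|t}$ times the initial norm. Since $\mu$ grows like $\overline M_1 e^{2|A|j\tau}\mu_0$ on this phase, I would argue (this is the content of Lemma~\ref{Lemma1}) that the ratio of the state norm to $\mu$ is eventually driven below the threshold $M\overline M-\Delta$, so that the detection event \eqref{first_time_t0} triggers at some finite $t_0$. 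The quantitative dependence of $t_0$ on the initial data is what feeds the exponent $\bigl(2-\tfrac{\ln\Omega}{T}\tfrac1{|A|}\bigr)$ and the prefactor $\gamma$ in the final estimate.

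In the \textbf{closed-loop (zooming-in) phase}, $t>t_0$, the predictor feedback \eqref{control_quantizer} is switched on with the quantized predictor \eqref{predictor_quantizer}. Here I would apply the backstepping transformation \eqref{backstepping_direct_transformation} to pass to the target system \eqref{targetsystem_without_quantizer}--\eqref{targetsystem_without_quantizer3}, treating the quantization error $K(P_{\mu}(X,u)-U_{\rm nom})$, i.e. the mismatch between the quantized and exact predictors, as a disturbance entering the boundary condition $w(D,t)$. The key per-interval claim (Lemma~\ref{Lemma2}) is that on each interval $(t_0+(i-1)T,\,t_0+iT]$, over which $\mu$ is held constant, the norm $\|(X,w)\|$ contracts by the factor $\Omega<1$. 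To establish this I would combine the ISS estimate for the Hurwitz predictor loop, using the decay bound $|e^{(A+BK)t}|\le M_\sigma e^{-\sigma t}$ from \eqref{expABK}, with the ISS/finite-time-transport property of the pure-transport PDE \eqref{targetsystem_without_quantizer2}; the small-gain condition \eqref{small_gain} is exactly what guarantees the interconnection gain of these two subsystems is below unity, so the closed loop is ISS with respect to the quantization error. Properties P1--P3 of the quantizer, together with \eqref{conditionMDelta}, then ensure the error stays proportional to $\Delta\mu$ and that the state remains inside the quantizer range (so P1 keeps applying and saturation is avoided), yielding the geometric decrease by $\Omega$.

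Once the per-interval contraction is in hand, the \textbf{assembly} is routine: iterating over the intervals gives $\|(X,w)(t_0+iT)\|$ decaying like $\Omega^i$, which translates into the exponential rate $e^{(\ln\Omega/T)t}$ after accounting for the equivalence \eqref{equivalence} between the $(X,u)$ and $(X,w)$ norms via $M_1,M_2$. Converting the discrete decay into the continuous-time bound \eqref{stability_result_u}, and folding in the $t_0$-dependence from Phase~1 through the term $\mu(t_0)\sim\mu_0(M\overline M-2\Delta)^{-1}\cdots$, produces the precise constant $\gamma$ and the fractional exponent. For existence and uniqueness I would invoke the cited well-posedness framework of \cite{karafyllis2021input,espitia2017stabilization,KKnonlocal}, which applies because the quantizers are locally Lipschitz, keeping the solution in $AC(\mathbb{R}_+,\mathbb{R}^n)\times\Clpw$.

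\emph{Main obstacle.} The delicate step is the per-interval contraction in Phase~2: one must simultaneously (i) verify that the state never leaves the quantizer range so that P1 (rather than the weaker P2) governs the error, (ii) close the small-gain loop between the ODE-predictor part and the transport-PDE part with the quantization mismatch as input, and (iii) track all the constants $\phi,\varphi_1,M_0,\lambda$ carefully enough that the resulting contraction factor is exactly the $\Omega$ of \eqref{Omega} and that \eqref{conditionMDelta} forces $\Omega<1$. The interdependence of these constants — $\lambda$ large enough for \eqref{small_gain}, then $M_0$ defined through $\phi,\varphi_1$, then $\Omega$ and $T$ through $M_0$ — is where the bookkeeping is heaviest and where an honest proof must be most careful.
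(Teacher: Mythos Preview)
Your proposal is correct and follows essentially the same approach as the paper: the two-phase (zooming-out/zooming-in) decomposition, the use of Lemma~\ref{Lemma1} for the open-loop phase and Lemma~\ref{Lemma2} for the per-interval small-gain contraction in the $(X,w)$ variables, the iteration to get $\Omega^{i}$ decay, and the well-posedness via the cited Lipschitz framework are exactly how the paper proceeds. One small slip: the open-loop bound is $\overline{M}_1 e^{|A|t}$ (not $e^{2|A|t}$), and note that Lemma~\ref{Lemma1} additionally has to argue, via P1 and P2, that the \emph{detectable} event \eqref{first_time_t0} in the quantized variables actually implies the \emph{needed} bound \eqref{bound_in_time_t0} on the true state --- a step you gloss over but which the paper carries out explicitly.
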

The proof relies on Lemmas \ref{Lemma1} and \ref{Lemma2} which are presented next. In particular, Lemma~\ref{Lemma1} establishes a bound on the solutions during the zooming out (open-loop) phase, which in turn is utilized to prove the existence of a time instant at which the solutions get within the range of the quantizer (ie., they satisfy \eqref{first_time_t0}). Subsequently, Lemma~\ref{Lemma2} establishes a bound on closed-loop solutions, via employing a small-gain based ISS argument, for time intervals of constant $\mu$. This bound implies that the solutions' magnitude decays by a factor of $\Omega$ every $T$ time units.
\vspace{.5cm}
\begin{Lem}\label{Lemma1}
	Let $\Delta$ and $M$ satisfy \eqref{conditionMDelta}, 
	%$\frac{\Delta}{M}<\frac{M_2}{M_1(1+M_0)\max \{(1+\lambda)(1+M_0),2\}}$,
	there exists a time $t_0$ satisfying 
	\begin{equation}\label{t0}
		t_{0} \leqslant \frac{1}{|A|} \ln\left(\dfrac{\frac{1}{\mu_{0}}\left(\left|X_{0}\right|+\left\|u_{0}\right\|_{\infty}\right)}{(M \overline{M}-2 \Delta)}\right),
	\end{equation} 
	such that \eqref{first_time_t0} holds, and thus, the following also holds 
	\begin{equation}\label{bound_in_time_t0}
		|X(t_{0})|+\|u(t_0)\|_{\infty} \leq  M \overline{M}\mu(t_0).
	\end{equation}
\end{Lem}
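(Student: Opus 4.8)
The plan is to treat the zooming-out phase in open loop and then read off the triggering instant from the quantizer properties P1--P2. First I would use that, by \eqref{control_quantizer}, $U\equiv0$ on $[0,t_0]$, so the actuator dynamics \eqref{pde_representation01}--\eqref{pde_representation1} reduce to a transport equation with zero boundary data; integrating along characteristics gives $u(x,t)=u_0(x+t)$ when $x+t\le D$ and $u(x,t)=0$ otherwise, hence $\|u(\cdot,t)\|_\infty\le\|u_0\|_\infty$ throughout the phase. Inserting $u(0,\cdot)$ into the variation-of-constants formula for \eqref{pde_representation} and bounding the convolution integral by $D\mathrm{e}^{|A|t}$ yields $|X(t)|\le\mathrm{e}^{|A|t}(|X_0|+D|B|\,\|u_0\|_\infty)$. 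Adding the two estimates and using $\overline{M}_1=1+D|B|$ from \eqref{Mbar} gives the open-loop bound $|X(t)|+\|u(\cdot,t)\|_\infty\le\overline{M}_1\mathrm{e}^{|A|t}(|X_0|+\|u_0\|_\infty)$ for $0\le t\le t_0$.

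Next I would compare this with the zoom variable. From \eqref{switching_parameter}, on the active interval the index satisfies $j\tau\ge t$, so $\mu(t)\ge\overline{M}_1\mathrm{e}^{2|A|t}\mu_0$. Condition \eqref{conditionMDelta} forces $M\overline{M}-2\Delta>0$, since the maximum in its denominator is at least $2M_1$ and $\overline{M}=M_2/(M_1(1+M_0))$ by \eqref{M}. Combining the solution bound with the lower bound on $\mu$ shows that as soon as $\mathrm{e}^{|A|t}\ge(|X_0|+\|u_0\|_\infty)/((M\overline{M}-2\Delta)\mu_0)$, equivalently $t\ge\bar t:=\tfrac{1}{|A|}\ln\!\big(\tfrac{1}{\mu_0}(|X_0|+\|u_0\|_\infty)/(M\overline{M}-2\Delta)\big)$, one has $\|(X(t),u(\cdot,t))\|\le(M\overline{M}-2\Delta)\mu(t)$, so the rescaled state has norm $<M$. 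Property P1 together with the triangle inequality then make the triggering inequality \eqref{first_time_t0} hold at every such $t$; since $t_0$ is the first triggering time, this yields $t_0\le\bar t$, i.e.\ \eqref{t0}.

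It remains to derive \eqref{bound_in_time_t0} at the actual $t_0$. The subtlety is that $t_0$ may be strictly smaller than $\bar t$, so the open-loop estimate need not place $\|(X(t_0),u(\cdot,t_0))\|$ below $(M\overline{M}-2\Delta)\mu(t_0)$; instead I would extract the bound from the event itself. Dividing \eqref{first_time_t0} by $\mu(t_0)$ gives $\|(q_1(X(t_0)/\mu(t_0)),q_2(u(t_0)/\mu(t_0)))\|\le M\overline{M}-\Delta<M-\Delta$, because $\overline{M}<1$; the contrapositive of P2 then yields $\|(X(t_0)/\mu(t_0),u(\cdot,t_0)/\mu(t_0))\|\le M$, so P1 bounds the quantization error by $\Delta\mu(t_0)$. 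A final triangle inequality gives $|X(t_0)|+\|u(t_0)\|_\infty\le(M\overline{M}-\Delta)\mu(t_0)+\Delta\mu(t_0)=M\overline{M}\mu(t_0)$, which is \eqref{bound_in_time_t0}.

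The main obstacle is precisely this last separation: because $t_0$ is the first trigger and may precede $\bar t$, the magnitude of the true state at $t_0$ is not directly controlled by the solution estimate, and the bound must be recovered from the event through the combined use of P2 (to land the rescaled state inside the quantizer range, exploiting $\overline{M}<1$) and P1 (to bound the resulting error). A minor accompanying point is that the first trigger is well-defined and finite: on each interval of constant $\mu$ the left-hand side of \eqref{first_time_t0} is continuous in $t$ (the almost-quantizers being locally Lipschitz), and the analysis above forces the event to occur no later than $\bar t$, so the infimum defining $t_0$ is attained and bounded by $\bar t$.
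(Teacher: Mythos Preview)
Your proposal is correct and follows essentially the same route as the paper: the open-loop transport/variation-of-constants bound $|X(t)|+\|u(t)\|_\infty\le\overline{M}_1 e^{|A|t}(|X_0|+\|u_0\|_\infty)$, comparison with $\mu(t)\ge\overline{M}_1 e^{2|A|t}\mu_0$ to produce a time at which the rescaled state falls below $M\overline{M}-2\Delta$, and then the two-step use of P2 (to land $\|(X/\mu,u/\mu)\|\le M$) and P1 with the triangle inequality (to obtain \eqref{bound_in_time_t0}). The only cosmetic difference is that the paper phrases the last step as two contradictions (excluding first $\|(X/\mu,u/\mu)\|>M$, then $M\overline{M}<\|(X/\mu,u/\mu)\|\le M$), whereas you argue directly; the content is identical.
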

\begin{proof}	
	For all $0\le t\le t_0$, thanks to \eqref{control_quantizer} one has $U(t)=0,$ and thus, the corresponding open-loop system reads as
	\begin{align}\label{pde_representation0}
		\dot{X}(t)&=A X(t)+B u(0,t),\\
		u_{t}(x, t)&=u_{x}(x,t), \\
		\label{pde_representation3}	u(D, t)&=0.
	\end{align}	Using the method of characteristics, the solution to the transport subsystem reads as $u(x, t)=u_{0}(x+t)$ for $0\le x+t\le D$ and $u(x,t)=0$ for $x+t>D$. Thus 
	\begin{equation}\label{estimation_u_norm}
		\|u(t)\|_{\infty} \leqslant \left\|u_{0}\right\|_{\infty}. 
	\end{equation}
	From the equation \eqref{estimation_u_norm}
	one has by the variation of constants formula for $0\le t\le t_0$
	\begin{align}
		%		X(t)&=e^{A t} X(0)+\int_{0}^{t} e^{A(t-s)} B u(0, s) d s\\
		%	|X(t)| &\leq e^{|A| t}\left|X_{0}\right|+D\left(e^{|A| t}-1\right)|B|\|u(0, s)\|_{\infty} \\
		%	&\leqslant e^{|A| t}\left|X_{0}\right|+D\left(e^{|A| t}-1\right)|B|\left\|u_{0}\right\|_{\infty} \\
		%	&\leq(1+D|B|) e^{|A| t}\left(\left|X_{0}\right|+\left\|u_{0}\right\|_{\infty}\right)\\
		|X(t)|&\leq\overline{M}_1 e^{|A| t}\left(\left|X_{0}\right|+\left\|u_{0}\right\|_{\infty}\right),\label{normXt0}
	\end{align} with $\overline{M}_1$ given by \eqref{Mbar}.
	Therefore, combining \eqref{estimation_u_norm} and \eqref{normXt0}  we have for $0\le t\le t_0$
	\begin{equation}\label{normX0t0}
		|X(t)|+\|u(t)\|_{\infty} \leqslant \overline{M}_1e^{|A| t}\left(\left|X_{0}\right|+\left\|u_{0}\right\|_{\infty}\right).
	\end{equation} Choosing the switching signal $\mu$ according to \eqref{switching_parameter}, one has the existence of a time $t_0$ verifying \eqref{t0}  such that
	\begin{align} 
		\dfrac{\left|X\left(t_{0}\right)\right|}{\mu(t_0)}+ \dfrac{\left\|u\left(t_{0}\right)\right\|_{\infty}}{\mu(t_0)} \le M \overline{M}-2 \Delta.
	\end{align} Thus, using property P1 of the quantizer and the triangle inequality one obtains
	\begin{align}
		\nonumber&\left|q_1\left(\frac{X(t_0)}{\mu(t_0)}\right)\right|+\left\|q_2\left(\frac{u\left( t_{0}\right)}{\mu\left(t_{0}\right)}\right)\right\|_{\infty}\\
		\nonumber&=\left|q_1\left(\frac{X(t_0)}{\mu(t_0)}\right)-\frac{X(t_0)}{\mu(t_0)}+\frac{X(t_0)}{\mu(t_0)}\right|\\
		\nonumber	&+\left\|q_2\left(\frac{u\left( t_{0}\right)}{\mu\left(t_{0}\right)}\right)-\frac{u\left( t_{0}\right)}{\mu\left(t_{0}\right)}+\frac{u\left( t_{0}\right)}{\mu\left(t_{0}\right)}\right\|_{\infty}\\
		\nonumber	&\le \left|q_1\left(\frac{X(t_0)}{\mu(t_0)}\right)-\frac{X(t_0)}{\mu(t_0)}\right|+\left\|q_2\left(\frac{u\left( t_{0}\right)}{\mu\left(t_{0}\right)}\right)-\frac{u\left( t_{0}\right)}{\mu\left(t_{0}\right)}\right\|_{\infty}\\
		\nonumber&+\dfrac{\left|X\left(t_{0}\right)\right|}{\mu(t_0)}+ \dfrac{\left\|u\left(t_{0}\right)\right\|_{\infty}}{\mu(t_0)}\\
		%\nonumber&\le \Delta+M \overline{M}- 2\Delta\\
		\label{relation_with_q}
		&\le M \overline{M}- \Delta. 
	\end{align} This implies that the relation \eqref{first_time_t0} is satisfied. We will then prove that detecting event \eqref{first_time_t0} along with the properties of the quantizer confirms that at time $t_0$, relation \eqref{bound_in_time_t0} also holds. To do so, let us first exclude that $ \left|\frac{X\left(t_{0}\right)}{\mu\left(t_{0}\right)}\right|+\left\|\frac{ u\left(t_{0}\right)}{\mu\left(t_{0}\right)}\right\|_{\infty}> M$ is satisfied. If that was the case, thanks to property P2 of the quantizer the inequality
	\begin{align}
		&\left|q_1\left(\frac{X\left(t_{0}\right)}{\mu\left(t_{0}\right)}\right)\right|+ \left\|q_2\left(\frac{u\left( t_{0}\right)}{\mu\left(t_{0}\right)}\right)\right\|_{\infty}>M-\Delta,
	\end{align} would be verified, and therefore, since $\overline{M}\le 1$
	\begin{align}
		&\left|q_1\left(\frac{X\left(t_{0}\right)}{\mu\left(t_{0}\right)}\right)\right|+ \left\|q_2\left(\frac{u\left( t_{0}\right)}{\mu\left(t_{0}\right)}\right)\right\|_{\infty}>M\overline{M}-\Delta,
	\end{align} would also be satisfied. This contradicts \eqref{first_time_t0}. Thus, $\left|\frac{X\left(t_{0}\right)}{\mu\left(t_{0}\right)}\right|+\left\|\frac{ u\left(t_{0}\right)}{\mu\left(t_{0}\right)}\right\|_{\infty}\le M$ holds.  Let us check now if the following could hold
	\begin{equation}\label{eqMMbar}
		M\overline{M}< \left|\frac{X\left(t_{0}\right)}{\mu\left(t_{0}\right)}\right|+\left\|\frac{ u\left(t_{0}\right)}{\mu\left(t_{0}\right)}\right\|_{\infty} \le M. 
	\end{equation}  In that case, with property P1 of the quantizer one would have
	\begin{align}
		\nonumber&\left|q_1\left(\frac{X\left(t_{0}\right)}{\mu\left(t_{0}\right)}\right)-\frac{X\left(t_{0}\right)}{\mu\left(t_{0}\right)}\right|+\left\|q_2\left(\frac{u\left(t_{0}\right)}{\mu\left(t_{0}\right)}\right)-\frac{u\left( t_{0}\right)}{\mu\left(t_{0}\right)}\right\|_{\infty}\\
		& \leqslant \Delta. \label{eqMbar1}
	\end{align} Using the triangle inequality one has from \eqref{eqMbar1}
	\begin{align}
		\nonumber&\left|\frac{X\left(t_{0}\right)}{\mu\left(t_{0}\right)}\right|+\left\|\frac{u\left(t_{0}\right)}{\mu\left(t_{0}\right)}\right\|_{\infty}\\
		%\nonumber&=\left| \frac{X\left(t_{0}\right)}{\mu\left(t_{0}\right)}-q_1\left(\frac{X\left(t_{0}\right)}{\mu\left(t_{0}\right)}\right)+q_1\left(\frac{X\left(t_{0}\right)}{\mu\left(t_{0}\right)}\right)\right|\\
		%\nonumber&+\left\|\frac{u\left(t_{0}\right)}{\mu\left(t_{0}\right)}-q_2\left(\frac{u\left(t_{0}\right)}{\mu\left(t_{0}\right)}\right)+q_2\left(\frac{u\left(t_{0}\right)}{\mu\left(t_{0}\right)}\right)\right\|_{\infty}\\
		\nonumber&\le \left|q_1\left(\frac{X\left(t_{0}\right)}{\mu\left(t_{0}\right)}\right)-\frac{X\left(t_{0}\right)}{\mu\left(t_{0}\right)}\right|+\left\|q_2\left(\frac{u\left( t_{0}\right)}{\mu\left(t_{0}\right)}\right)-\frac{u\left( t_{0}\right)}{\mu\left(t_{0}\right)}\right\|_{\infty}\\
		\nonumber&+\left|q_1\left(\frac{X\left(t_{0}\right)}{\mu\left(t_{0}\right)}\right)\right|+ \left\|q_2\left(\frac{u\left( t_{0}\right)}{\mu\left(t_{0}\right)}\right)\right\|_{\infty}\\
		\nonumber &\le \Delta+\left|q_1\left(\frac{X\left(t_{0}\right)}{\mu\left(t_{0}\right)}\right)\right|+ \left\|q_2\left(\frac{u\left( t_{0}\right)}{\mu\left(t_{0}\right)}\right)\right\|_{\infty}.
	\end{align} Then we obtain by using this last inequality and \eqref{eqMMbar}
	%\begin{equation}
	%		M \overline{M} < \left|\frac{X\left(t_{0}\right)}{\mu\left(t_{0}\right)}\right|+\left\|\frac{ u\left(t_{0}\right)}{\mu\left(t_{0}\right)}\right\|_{\infty}
	%\end{equation}we would get:
	\begin{equation}
		\overline{M} M-\Delta<\left|q_1\left(\frac{X\left(t_{0}\right)}{\mu\left(t_{0}\right)}\right)\right|+\left\|q_2\left(\frac{u\left( t_{0}\right)}{\mu\left(t_{0}\right)}\right)\right\|_{\infty},	
	\end{equation} 
	%from which we get $\mu\left(t_{0}\right)(\overline{M}M-\Delta)<\left|\mu\left(t_{0}\right)q_1\left(\frac{X\left(t_{0}\right)}{\mu\left(t_{0}\right)}\right)\right|+\left\|\mu\left(t_{0}\right)q_2\left(\frac{u\left( t_{0}\right)}{\mu\left(t_{0}\right)}\right)\right\|_{\infty}$
	contradicting \eqref{first_time_t0}. Thus,  \eqref{bound_in_time_t0} is also valid.
%	{\hfill $\Box$ } 
\end{proof}

%%%%%%%%%%%%%%%%%%%%%%%%%%%%%%%%%%%%%%%%%%%%%%%%%%%%%%%%%%%%%%%%%%%%%%%%%%%%%%%%%%%%%%%%%%%%%%%%%%%%%%%%%%%%%%%%%%%%%%%%%%%%%%%%%%%%%%%%%%%%%%%%%%%%%%%%%%%%%%%%%%
\begin{Lem}\label{Lemma2}
	Choose $K$ such that $A+BK$ is Hurwitz and let $\sigma,M_{\sigma}>0$ be such that \eqref{expABK} holds. Select $\lambda$ large enough in such a way that the small-gain condition \eqref{small_gain} holds. Then the solutions to the target system \eqref{targetsystem_without_quantizer}--\eqref{targetsystem_without_quantizer3} with the quantized controller \eqref{control_quantizer}, which verify, for fixed $\mu$,
	\begin{equation}\label{hyplemma2}
		|X(t_0)|+\|w(t_0)\|_{\infty}\le \frac{M_{2}}{1+M_0} M\mu,
	\end{equation} they satisfy for $t_0<t\le t_0+T$
	\begin{align}
		\nonumber& |X(t)|+\|w(t)\|_{\infty}\leqslant \max\left\{ M_{0} e^{-\delta (t-t_0)}\left(\left|X(t_0)\right| \right.\right.\\
		&\left. +\left\|w(t_0)\right\|_{\infty}\right),\left. \Omega \frac{M_{2}}{1+M_0} M\mu \right\}.\label{norm_X_w}
	\end{align} In particular, the following holds
	\begin{align}\label{normXu1}
		|X(t_0+T)|+\|w(t_0+T)\|_{\infty} \leq \Omega \frac{M_{2}}{1+M_0} M\mu.
	\end{align}
\end{Lem}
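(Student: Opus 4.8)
The plan is to treat the target system \eqref{targetsystem_without_quantizer}--\eqref{targetsystem_without_quantizer3} as an ODE--transport cascade driven, through its boundary condition, by the quantization error, and then to close an ISS small-gain loop. First I would rewrite the boundary term: substituting the quantized controller from \eqref{predictor_quantizer} and the nominal law \eqref{nominalU} into \eqref{targetsystem_without_quantizer3} gives
\[
w(D,t) = K\mathrm{e}^{AD}\big(q_{1\mu}(X(t))-X(t)\big) + K\int_0^D \mathrm{e}^{A(D-y)}B\big(q_{2\mu}(u(y,t))-u(y,t)\big)\,dy,
\]
i.e.\ the boundary value is exactly the quantization error. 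Using $q_{i\mu}(\cdot)=\mu q_i(\cdot/\mu)$ together with property P1, as long as $\|(X(t),u(t))\|\le M\mu$ this term is bounded by $M_3\mu\Delta$, with $M_3$ from \eqref{M3}. This reduces matters to an ISS analysis of the cascade with a bounded boundary disturbance of size $M_3\mu\Delta$.

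Next I would derive the two component estimates in time-weighted supremum norms with a common rate $\delta\in(0,\min\{\sigma,\nu\})$. For the transport PDE, the method of characteristics gives $w(x,t)=w(x+(t-t_0),t_0)$ when $x+(t-t_0)\le D$ and $w(x,t)=w(D,t-(D-x))$ otherwise; this yields a finite-memory bound in which the initial contribution is carried by $\|w(t_0)\|_\infty$ (and vanishes after $D$ time units) while the boundary contributes with $\mathrm{e}^{\delta D}$-type factors, which is the origin of the $\mathrm{e}^{D}$ and $\mathrm{e}^{D(\nu+1)}$ terms in \eqref{phi and psy}. For the ODE, \eqref{expABK} and the variation of constants formula give $|X(t)|\le M_\sigma \mathrm{e}^{-\sigma(t-t_0)}|X(t_0)| + \tfrac{M_\sigma}{\sigma}|B|\sup_{t_0\le s\le t}|w(0,s)|$, and $|w(0,s)|\le\|w(s)\|_\infty$ feeds the transport output back in.

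The core is the small-gain combination. Substituting the transport estimate into the ODE estimate and conversely produces a single inequality for the weighted norm of $(X,w)$ whose loop gain is the contraction factor $\varphi_1<1$ of \eqref{phi and psy}; its smallness is guaranteed precisely by \eqref{small_gain} through a choice of $\nu,\varepsilon$ making $\phi<1$. Solving the resulting fixed-point inequality (summing the geometric factors $(1-\phi)^{-1},(1-\varphi_1)^{-1}$) gives the constant $M_0$ of \eqref{M0} and an estimate of the form
\[
|X(t)|+\|w(t)\|_\infty \le \max\Big\{M_0\mathrm{e}^{-\delta(t-t_0)}\big(|X(t_0)|+\|w(t_0)\|_\infty\big),\ (1+\lambda)(1+M_0)M_3\mu\Delta\Big\}.
\]
A direct computation using \eqref{Omega} shows $(1+\lambda)(1+M_0)M_3\mu\Delta=\Omega\frac{M_2}{1+M_0}M\mu$, the second entry of the max in \eqref{norm_X_w}, establishing that bound.

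The main obstacle is that the $M_3\mu\Delta$ boundary bound holds only while the solution stays within the quantizer range $\|(X,u)\|\le M\mu$, so the estimate is a priori conditional and must be made unconditional by a continuation argument. Using \eqref{equivalence} to pass between $\|(X,w)\|$ and $\|(X,u)\|$, hypothesis \eqref{hyplemma2} gives $\|(X(t_0),u(t_0))\|\le\frac{M\mu}{1+M_0}<M\mu$; I would then set $t^\ast=\sup\{t\le t_0+T:\ \|(X(s),u(s))\|\le M\mu\ \text{on}\ [t_0,t]\}$ and check that, on $[t_0,t^\ast]$, both entries of the max keep $\|(X,u)\|\le M\mu$ — the decaying term because $\frac{M_0}{1+M_0}<1$, and the disturbance term because $\Omega<1+M_0$ — so the range condition cannot be first violated, whence $t^\ast=t_0+T$ and the estimate holds on the whole interval. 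Finally, evaluating at $t=t_0+T$ with $\mathrm{e}^{-\delta T}=\frac{\Omega}{1+M_0}$ from \eqref{T} makes the decaying term no larger than $\frac{M_0}{1+M_0}\Omega\frac{M_2}{1+M_0}M\mu$, strictly below the disturbance term, so the max equals $\Omega\frac{M_2}{1+M_0}M\mu$, yielding \eqref{normXu1}.
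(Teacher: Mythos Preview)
There is a real gap in the core step. You bound the boundary disturbance by the \emph{constant} $M_3\mu\Delta$ via P1 and then invoke a small-gain loop with contraction factor $\varphi_1$. But with $|d(t)|\le M_3\mu\Delta$ independent of $(X,w)$, the transport estimate for $w$ contains no $X$-term at all, so there is nothing to substitute ``conversely''; the system is a pure cascade and the factors $\phi,\varphi_1$ in \eqref{phi and psy}---which carry the $\tfrac{1}{1+\lambda}$---simply never arise. Worse, in the time-weighted norms you use, a constant disturbance gives $\sup_{t_0\le s\le t}|d(s)|e^{\delta(s-t_0)}\le M_3\mu\Delta\,e^{\delta(t-t_0)}$, which grows, so the fading-memory inequalities cannot be closed along these lines. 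The asserted disturbance gain $(1+\lambda)(1+M_0)$ is just the algebraic rearrangement of \eqref{Omega}; it is not derived from your cascade analysis, and the overshoot coefficient $M_0$ of \eqref{M0} is tailored to a different mechanism.

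The missing idea, which is what the paper does, is to use not only the upper bound $\|(X,u)\|\le M\mu$ but also a \emph{lower} bound on the state: restrict attention to the annulus $\Omega\tfrac{M_2}{(1+M_0)^2}M\mu\le|X|+\|w\|_\infty\le M_2M\mu$. On this set the constant bound $|d|\le M_3\mu\Delta$ is upgraded, via the lower inequality and the very definition of $\Omega$ in \eqref{Omega}, to the \emph{state-dependent} bound $|d|\le\tfrac{1}{1+\lambda}(|X|+\|w\|_\infty)$. Now $d$ does feed $X$ and $w$ back into the $w$-equation, \eqref{small_gain} becomes the relevant loop condition, the weighted-norm estimates close with gain $\varphi_1<1$, and one obtains the pure decay $|X(t)|+\|w(t)\|_\infty\le M_0e^{-\delta(t-t_0)}\big(|X(t_0)|+\|w(t_0)\|_\infty\big)$ with \emph{no} additive term, valid while the solution stays in the annulus. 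The second entry of the $\max$ in \eqref{norm_X_w} then comes from a separate argument for the case the lower bound is violated at some $t_0^\ast$: once $|X|+\|w\|_\infty$ drops below $\Omega\tfrac{M_2}{(1+M_0)^2}M\mu$, one shows it remains below $\Omega\tfrac{M_2}{1+M_0}M\mu$ (re-invoking the decay estimate whenever the trajectory re-enters the annulus). Your continuation argument for the quantizer range handles only the \emph{upper} bound; the lower-bound trick is what makes $\lambda$, $\phi$, $\varphi_1$, and the specific $M_0$ enter the picture.
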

\begin{proof}
	By virtue of \eqref{small_gain}, since the function
	\begin{equation}
		h\left(\bar{\varepsilon}, \bar{\nu}\right)=\frac{1+\bar{\varepsilon}}{1+\lambda} e^{D\left(\bar{\nu}+1\right)}\left(\frac{M_{\sigma}}{\sigma}|B|(\bar{\varepsilon}+1)+1\right), 
	\end{equation}
	is continuous at $(0,0)$ and verifies $h(0,0)<1$, there exist constants $\varepsilon$ and $\nu>0$ such that $h\left(\varepsilon, \nu\right)<1$, that is
	\begin{equation}\label{small_gain2}
		\frac{1+\varepsilon}{1+\lambda} e^{D(\nu+1)}\left(\frac{M_{\sigma}}{\sigma}|B|(\varepsilon+1)+1\right)<1.
	\end{equation} This condition implies
	\begin{equation}\label{small_gain1}
		\frac{1+\varepsilon}{1+\lambda} e^{D\left(\nu+1\right)}<1.	
	\end{equation}
	Using \eqref{targetsystem_without_quantizer} one obtains with variation of constants  formula
	\begin{equation}\label{solution_X}
		X(t)=e^{(A+B K) t} X(t_0)+\int_{t_0}^{t} e^{(A+B K)(t-s)} B w(0, s) ds.
	\end{equation}	
	Using relation \eqref{expABK}, relation \eqref{solution_X} gives
	\begin{equation}
		|X(t)| \leqslant M_{\sigma} e^{-\sigma (t-t_0)}|X(t_0)|+\frac{M_{\sigma}}{\sigma}|B| \sup_{t_0\leqslant s \leqslant t}\left(\|w(s)\|_{\infty}\right).\label{eqXw1}
	\end{equation}
	Using the fading memory lemma  \cite[Lemma 7.1]{karafyllis2021input},  for any $\varepsilon>0$, there exists $\delta_1\in (0,\sigma)$ such that 	
	\begin{align}
		\nonumber	|X(t)| e^{\delta_1 (t-t_0)}&\leqslant M_{\sigma}|X(t_0)|+(1+\varepsilon) \frac{M_{\sigma}}{\sigma}|B| \\
		&\times\sup_{t_0 \leqslant s \leqslant t}\left(\|w(s)\|_{\infty} e^{\delta_1 (s-t_0)}\right).\label{eqXw2}
	\end{align}	For the transport subsystem \eqref{targetsystem_without_quantizer2}, \eqref{targetsystem_without_quantizer3}, one can invoke the ISS estimate in sup-norm in \cite[estimate (2.23)]{KKnonlocal} (see also \cite[estimate (3.2.11)]{karafyllis2021input}) to get for all $\nu>0$ that
	\begin{align}
			\nonumber	\|w(t)\|_{\infty} &\leq e^{-\nu(t-t_0-D)}e^{D}\left\|w(t_0)\right\|_{\infty}\\
	\label{wmu1d}	&+e^{D\left(1+\nu\right)} \sup _{t_0 \leq s \leq t}(|d(s)|),
	\end{align}where\begin{align}
	\nonumber d(t)&=K e^{A D} \mu(t)\left(q_1\left(\frac{X(t)}{\mu(t)}\right)-\frac{X(t)}{\mu(t)}\right)\\
				\label{d1}	&+K\int_{0}^{D}  e^{A(D-y)} B \mu(t)\left(q_2\left(\frac{u(y, t)}{\mu(t)}\right)-\frac{u(y, t)}{\mu(t)}\right)dy,
	\end{align} with $u$ given in terms of $(X,w)$ by the inverse backstepping transformation \eqref{backstepping_inverse_transformation}. Applying the fading memory inequality \cite[Lemma 7.1]{karafyllis2021input}, there exists $\delta_2\in (0,\nu)$ such that
	\begin{align}
		\nonumber	\|w(t)\|_{\infty} e^{\delta_2 (t-t_0)} &\leqslant e^{D}\left\|w(t_0)\right\|_{\infty}+e^{D\left(\nu+1\right)}(1+\varepsilon) \\&\times\sup _{t_0 \leqslant s \leqslant t}\left(|d(s)| e^{\delta_2 (s-t_0)}\right). \label{eqWd}
	\end{align}
	Let us define $\delta$ as the minimum of $\delta_1$ and $\delta_2,$ %Since $\nu$ is arbitrary small, we can select $\nu<\sigma$, thus 
	thus $\delta \in (0, \min\{\sigma,\nu\})$. Now, for all $t\geq t_0$, let us define the following quantities
	\begin{align}
		\label{defnormw}\|w\|_{[t_0, t]}&:=\sup_{t_0 \leqslant s \leqslant t}\|w(s)\|_{\infty}e^{\delta (s-t_0)} ,\\
		|\label{defnormX}X|_{[t_0, t]}&:=\sup_{t_0 \leqslant s \leqslant t}|X(s)| e^{\delta (s-t_0)}.
	\end{align} We can therefore obtain from \eqref{eqXw2} and \eqref{eqWd},  using the definitions \eqref{defnormw} and \eqref{defnormX}, the following
	\begin{align}\label{norm_x_w}
		|X|_{[t_0, t]} \leqslant M_{\sigma}|X(t_0)|+(1+\varepsilon) \frac{M_{\sigma}}{\sigma}|B|\|w\|_{[t_0, t]},
	\end{align}	and
	\begin{align}
		\nonumber\|w\|_{[t_0, t]}&\le e^{D\left(\nu+1\right)}(1+\varepsilon) \sup _{t_0\leqslant s \leqslant t}\left(|d(s)| e^{\delta (s-t_0)}\right) \label{norm_w_0_t} \\
		&+e^{D}\left\|w(t_0)\right\|_{\infty}.
	\end{align}	
	Let us next estimate the term $\displaystyle\sup_{t_0 \leqslant s\leqslant t}\left(|d(s)| e^{\delta (s-t_0)}\right).$ 	From \eqref{d1} we get for $t_0<t\le t_0+T$
	\begin{align}
		\nonumber		|d|& \leqslant\left|K e^{A D}\right| \mu\left|q_1\left(\frac{X}{\mu}\right)-\frac{X}{\mu}\right|\\
		\nonumber		&+D \sup _{0 \leqslant y \leqslant D}|Ke^{A(D-y)}B| \mu\left\|q_2\left(\frac{u}{\mu}\right)-\frac{u}{\mu}\right\|_{\infty} \\
		&\le  M_{3} \mu \left\|q_1\left(\frac{X}{\mu}\right)-\frac{X}{\mu}, q_2\left(\frac{u}{\mu}\right)-\frac{u}{\mu}\right\|\label{d},
	\end{align} with $M_{3}$ defined in \eqref{M3} and $u$ given in terms of $(X,w)$ by the inverse backstepping transformation \eqref{backstepping_inverse_transformation}. Provided that 
	\begin{align}
		\label{condition}
		\Omega \frac{M_{2}}{(1+M_0)^2} M\mu \leq|X|+\|w\|_{\infty} \leqslant M_{2} M\mu,
	\end{align}
	thanks to the property $\rm P1$ of the quantizer, the left-hand side of bound \eqref{equivalence}, and the definition \eqref{Omega}, we obtain \begin{align}
		\nonumber	|d|& \le  M_{3}\Delta\mu\\
		\nonumber	&\leqslant \frac{(1+M_0)^2 M_{3}\Delta}{\Omega M_2M} \left(|X|+\|w\|_{\infty}\right) \\
		& \leqslant \frac{1}{1+\lambda}\left(|X|+\|w\|_{\infty}\right). 
		%	 \sup _{0 \leqslant s\leqslant t}|d(t)| &\leqslant \frac{1}{1+\lambda}\left(|X|+\|w\|_{\infty}\right)
	\end{align}	Therefore, as long as the solutions satisfy \eqref{condition} we get
	\begin{equation}
		\sup_{t_0 \leqslant s \leqslant t}\left(|d(s)| e^{\delta (s-t_0)}\right) \leqslant \frac{1}{1+\lambda}\|w\|_{[t_0, t]}+\frac{1}{1+\lambda}|X|_{[t_0, t]}.
	\end{equation}
	Hence, using \eqref{norm_w_0_t} and \eqref{phi and psy}, we obtain
	\begin{align}
		\nonumber	\|w\|_{[t_0, t]}& \leqslant e^{D}\left\|w(t_0)\right\|_{\infty}+\frac{1+\varepsilon}{1+\lambda} e^{D\left(\nu+1\right)}\|w\|_{[t_0, t]}\\
		&+\frac{1+\varepsilon}{1+\lambda} e^{D\left(\nu+1\right)}|X|_{[t_0, t]}, 
	\end{align}
	and hence,
	\begin{align}
	 \|w\|_{[t_0, t]}\leqslant\left(1-\phi\right)^{-1}e^D\left\|w(t_0)\right\|_{\infty}+\left(1-\phi\right)^{-1}\phi |X|_{[t_0, t]} ,\label{normw}
	\end{align}
	with $\phi=\frac{1+\varepsilon}{1+\lambda}e^{D\left(\nu+1\right)}<1.$
	Combining \eqref{norm_x_w} and \eqref{normw} one gets
	\begin{align}
		\nonumber	\|w\|_{[t_0,t]} &\leqslant\left(1-\phi\right)^{-1}\left(1-\varphi_1\right)^{-1}e^D\left\|w(t_0)\right\|_{\infty}\\
		\label{normw1}	&+\left(1-\phi\right)^{-1} \phi M_{\sigma}\left(1-\varphi_1\right)^{-1}|X(t_0)|, 
	\end{align} where we use the fact that from \eqref{phi and psy} it follows that $
	\varphi_1=(1+\varepsilon)(1-\phi)^{-1}\phi \frac{M_{\sigma}}{\sigma}|B|<1.$
	Combining \eqref{norm_x_w} and \eqref{normw} we also arrive at 
	\begin{align}
		\nonumber	 |X|_{[t_0, t]}&\le  (1+\varepsilon) \frac{M_{\sigma}}{\sigma}|B|\left(1-\phi\right)^{-1}e^D\left\|w(t_0)\right\|_{\infty} \\
		& + M_{\sigma}|X(t_0)|+(1+\varepsilon) \frac{M_{\sigma}}{\sigma} |B|\left(1-\phi\right)^{-1} \phi|X|_{[t_0, t]},
	\end{align} 
	that is
	\begin{align}
		\nonumber|X|_{[t_0, t]} &\leqslant(1-\varphi_1)^{-1}M_{\sigma}|X(t_0)|\\
		&+(1+\varepsilon) \left(1-\phi\right)^{-1}(1-\varphi_1)^{-1}e^D\frac{M_{\sigma}}{\sigma}|B|\left\|w(t_0)\right\|_{\infty}. \label{normX1}
	\end{align}  Therefore, one obtains from \eqref{normw1}
	\begin{equation}
		\|w\|_{[t_0, t]} \leq C_{0}\left(|X(t_0)|+\left\|w(t_0)\right\|_{\infty}\right),
	\end{equation} with
	\begin{multline}
		C_{0}=\max \left\{\left(1-\phi\right)^{-1}\left(1-\varphi_{1}\right)^{-1}e^D ;\right. \\\left.\left(1-\phi\right)^{-1} \left(1-\varphi_{1}\right)^{-1}\phi M_{\sigma}\right\},
	\end{multline}
	and from \eqref{normX1}
	\begin{equation}
		|X|_{[t_0,t]} \leq C_{1}\left(|X(t_0)|+\left\|w(t_0)\right\|_{\infty}\right), 
	\end{equation} with 
	\begin{align}
		\nonumber C_{1}&=\max \left\{(1-\varphi_1)^{-1}M_{\sigma}; \right. \\ &\left.(1+\varepsilon) \left(1-\phi\right)^{-1}(1-\varphi_1)^{-1}e^D\frac{M_{\sigma}}{\sigma}|B|\right\}.
	\end{align}
	Therefore, setting $M_0=C_{0}+ C_{1} $ we get
	\begin{equation} \label{eqXwM0}
		|X|_{[t_0, t]}+	\|w\|_{[t_0, t]} \leqslant M_0\left(|X(t_0)|+\left\|w(t_0)\right\|_{\infty}\right), 
	\end{equation}
	and using the definitions \eqref{defnormw}, \eqref{defnormX} we obtain
	\begin{equation}\label{norm_X_u1}
		|X(t)|+\|w(t)\|_{\infty}\leqslant M_0e^{-\delta (t-t_0)}\left(|X(t_0)|+\left\|w(t_0)\right\|_{\infty}\right). 
	\end{equation}
	%This last inequality implies for $t_0<t<t_0+T$: 
	%\begin{equation}
	%	\label{norm_X_u1}|X(t)|+\|w\|_{\infty} \leqslant  M_{0} e^{-\delta (t-t_0)}\left(\left|X(t_0)\right|+\left\|w(t_0)\right\|_{\infty}\right) .
	%\end{equation}
	For $t_{0}<t \leq t_{0} +T $, using relation \eqref{hyplemma2}, the fact that $e^{-\delta\left(t-t_{0}\right)}\le 1$,  and $\frac{M_0}{1+M_0}<1$ one has 
	\begin{align}
		|X(t)|+\|w(t)\|_{\infty} 
		\label{forMaxTerm}	& \le M_2M\mu. %\label{norm90}which makes estimate \eqref{eqXwM0} legitimate. 
	\end{align} Moreover, at the time instant $t_0+T$, thanks to the relation \eqref{hyplemma2} and the definition \eqref{T} of $T,$ one obtains from \eqref{norm_X_u1} that \begin{align}
		\left|X\left(t_{0}+T\right)\right|+\left\|w\left(t_{0}+T\right)\right\|_{\infty} \label{norm_X_u_T}&\leqslant \Omega \frac{M_2}{1+M_0}M\mu. 
	\end{align} Note that relations \eqref{eqXwM0} and \eqref{norm_X_u_T} are established provided that $\Omega \frac{M_{2}}{(1+M_0)^2} M\mu \leq|X|+\|w\|_{\infty}$. If there exists a time $t_0^*$ such that $t_0\le t_0^*\le t_0+T,$ at which the solutions satisfy \begin{align}\label{eq:t0*}
		\left|X\left(t_{0}^{*}\right)\right|+\left\|w\left(t_{0}^{*}\right)\right\|_{\infty} & \leqslant \Omega
		\frac{M_{2}}{(1+M_0)^2} M\mu,
	\end{align} then they also satisfy for $t_{0}^{*}\le t\le t_{0}+T$
	\begin{align}\label{eq:t01}
		|X(t)|+\|w(t)\|_{\infty}\leqslant  \Omega
		\frac{M_{2}}{1+M_0} M\mu, 
	\end{align} and therefore, combining this estimate with \eqref{norm_X_u1} we obtain the bound \eqref{norm_X_w}. To see this, note that once the solutions may enter again the region where \eqref{condition} holds (if this happens) then the previous analysis becomes legitimate. In particular, estimate (76) is activated, which implies that the solutions remain in the region where \eqref{eq:t01} holds.
\end{proof} We note here that the proof strategy of Lemma~\ref{Lemma2} relies on the objective to establish an ultimate boundedness property for the target system. Although, in \cite{bekiaris2020hybrid,liberzon2003hybrid}, a Lyapunov-like analysis is employed during the zooming-in stage, facilitating the attainment of an ultimate boundedness property (through the invariance of considered regions), our approach here relies on a small-gain ISS argument. Therefore, to establish an ultimate bound estimate, our analysis relies on a choice of the zooming in parameter $\Omega$ and the dwell-time $T$ (where $T$ denotes the time instant at which solutions enter a desired region), both dependent on the overshoot $M_0$.

We are now ready to prove Theorem \ref{Theorem1}.

{\em Proof of Theorem~\ref{Theorem1}:} 
The inequality 	$|X(t_0)|+\|u(t_{0})\|_{\infty} \leq M \overline{M}\mu$ in Lemma~\ref{Lemma1} holds with constant $\mu=\mu\left(t_{0}\right).$ Therefore, using \eqref{equivalence} and the definition \eqref{M} of $\overline{M}$, the inequality \eqref{hyplemma2} holds. Then applying Lemma~\ref{Lemma2} where $\mu$ is updated according to \eqref{switching_parameter} the inequality \eqref{normXu1} holds  with $\mu=\mu(t)=\mu(t_0+T)$. Thus, relation \eqref{normXu1} implies that \eqref{hyplemma2} holds but with $t_0\to t_0+T$ and $\mu=\mu(t_0+2T)=\Omega\mu(t_0+T)=\Omega\mu(t_0).$
Then by applying again Lemma~\ref{Lemma2}, we have for $t_{0}+T<t\le t_{0}+2 T$ and $\mu=\mu(t_0+2T)$
\begin{align}
	\label{eq2T}|X(t_0+2T)|+\|w(t_0+2T)\|_{\infty} \leq \Omega^2 \frac{M_{2}}{1+M_0} M\mu(t_0).
\end{align}Using the estimate \eqref{norm_X_w} in Lemma~\ref{Lemma2}, we have for\\ $t_{0}+T<t\le t_{0}+2 T$
\begin{align}
	\nonumber|X(t)|+\|w(t)\|_{\infty}&\leqslant \max \left\{M_{0} e^{-\delta (t-t_0-T)}\left(\left|X(t_0+T)\right|\right.\right.\\
	&\left.\left. +\left\|w(t_0+T)\right\|_{\infty}\right), \Omega \frac{M_{2}}{1+M_0} M\mu(t) \right\}.\label{eqmax}
\end{align}
Therefore, since in \eqref{switching_parameter} for $t_{0}+T<t\le t_{0}+2 T$, $\mu(t)=\Omega\mu(t_0),$ using \eqref{normXu1} we obtain for $t_{0}+T<t\le t_{0}+2 T$
\begin{align}
	|X(t)|+\|w(t)\|_{\infty} \leqslant  \Omega M_{2} M\mu(t_0).\label{eqmax2T}
\end{align}  Repeating this procedure, we arrive at
\begin{align}
	|X(t)|+\|w(t)\|_{\infty}\leqslant \Omega^{i-1}M_2 M\mu(t_0),\label{bound_Norm_X_u_i2}
\end{align} for all $ t_{0}+(i-1) T<t\le  t_{0}+i T.$ 
Therefore for $t_{0}+(i-1) T<t\le  t_{0}+i T$, $i=1,2,\dots,$ we get
\begin{align}
	&|X(t)|+\|w(t)\|_{\infty} 	\label{bound_Norm_X_w_i2}\le \Omega^{\left(\frac{t-t_0}{T}\right)} \frac{M_2M}{\Omega}\mu(t_0).
\end{align} From the definition of $\mu$ in \eqref{switching_parameter} one has
\begin{align}
	\mu(t_0)&\leq \overline{M}_1 e^{2|A| \tau } e^{2|A| t_{0} }\mu_{0},
\end{align} thus, for $t\ge t_0$ it follows from \eqref{bound_Norm_X_w_i2} that 
\begin{align}
	&|X(t)| +\|w(t)\|_{\infty} \leq \mu_{0}\overline{M}_1\frac{M_2M}{\Omega}e^{2|A| \tau} e^{2|A| t_{0}}  e^{\left(t-t_{0}\right) \frac{\ln \Omega}{T}}.
\end{align}
Using estimate \eqref{normX0t0} and inequality \eqref{equivalence} we obtain
\begin{align}
	|X(t)|+\|w(t)\|_{\infty} \leqslant \overline{M}_1M_1e^{|A| t}\left(\left|X_{0}\right|+\left\|u_{0}\right\|_{\infty}\right),
\end{align} for $0 \leqslant t \leqslant t_{0}.$
Combining these two last estimates and the inequality \eqref{equivalence} we get for all $t\ge 0$
\begin{align}
	\nonumber	|X(t)|+\|u(t)\|_{\infty} 
	&\leqslant \max \left\{e^{|A| t_{0}},  |X_{0}|+\|u_0\|_{\infty}\right\} \overline{M}_{2} \\
	&\times e^{|A| t_{0}} e^{-\frac{\ln \Omega}{T} t_{0}}e^{\frac{\ln \Omega}{T} t},	\label{normXuf}
\end{align} where 
\begin{align}
	\overline{M}_{2}=\frac{\overline{M}_1}{M_2}\max \left\{\frac{M_2M}{\Omega} e^{2|A| \tau} \mu_{0}, M_1\right\}.
\end{align}
From \eqref{t0} we have \begin{equation}
	t_{0} \leqslant \frac{1}{|A|} \ln \left[\overline{M}_{3}\left(\left|X_{0}\right|+\left\|u_{0}\right\|_{\infty}\right)\right],
\end{equation} with \begin{equation}
	\overline{M}_{3}=\frac{1}{\mu_0(M \overline{M}-2 \Delta)}.
\end{equation} Thus,
\begin{equation}
	\label{expAt0}e^{|A| t_{0}} \leq \overline{M}_{3}\left(\left|X_{0}\right|+\left\|u_{0}\right\|_{\infty}\right).
\end{equation}  Moreover,
\begin{align}
	-\frac{\ln\Omega}{T} t_{0} &\leqslant-\frac{\ln \Omega}{T}\frac{1}{|A|} \ln\left[\overline{M}_{3} \left(|X_{0}|+\|u_{0}\|_{\infty}\right) \right],
\end{align}
\text{ and thus, }
\begin{align}
	\nonumber e^{-\frac{\ln \Omega}{T} t_{0}} &\leqslant e^{\ln \overline{M}_{3}\left(\left|X_{0}\right|+\left\|u_{0}\right\|_{\infty}\right)^{-\frac{\ln \Omega}{T} \times \frac{1}{|A|}}} \\
	& \leqslant \overline{M}_{3}^{-\frac{\ln \Omega}{T} \times \frac{1}{|A|}}\left(\left.\right|{X_{0}}|+ \|{u_{0}}\|_{\infty}\right)^{-\frac{\ln \Omega}{T} \times \frac{1}{|A|}}\label{expLnOmegaT}.
\end{align}
From \eqref{expAt0} and \eqref{expLnOmegaT} one obtains
%\begin{align}
%\nonumber	& \max \left\{e^{|A| t_{0}},\left|X_{0}\right|+\left\|u_{0}\right\|_{\infty}\right\} \\
%	\label{max1}&\leqslant \max \left\{\overline{M}_{3}, 1\right\}\left(\left|X_{0}\right|+\left\|u_{0}\right\|_{\infty}\right), 
%\end{align}
\begin{align}
	\nonumber	 \max \left\{e^{|A| t_{0}},\left|X_{0}\right|+\left\|u_{0}\right\|_{\infty}\right\}& \le \left(\left|X_{0}\right|+\left\|u_{0}\right\|_{\infty}\right)\\&
	\times	\max \left\{\overline{M}_{3}, 1\right\}, \label{max1}\\
	\nonumber e^{|A| t_{0}} e^{-\frac{\ln\Omega}{T} t_{0}}&\le  \overline{M}_{3}^{\left(1-\frac{\ln \Omega}{T} \times \frac{1}{|A|}\right)}\\ 
	\hspace{-2cm}	\label{expAexp}&\times \left(\left|X_{0}\right|+\|u_{0}\|_{\infty}\right)^{\left(1-\frac{\ln \Omega}{T|A|}  \right)}.
\end{align}
%\begin{align}
%	& \max \left\{e^{|A| t_{0}},\left|X_{0}\right|+\left\|u_{0}\right\|_{\infty}\right\} 
%	\label{max1}\leqslant \max \left\{\overline{M}_{3}, 1\right\}\left(\left|X_{0}\right|+\left\|u_{0}\right\|_{\infty}\right), \\
%	\nonumber	\text { and } \\
%	\nonumber	& e^{|A| t_{0}} e^{-\frac{\ln\Omega}{T} t_{0}} \\ \label{expAexp}&\leqslant \overline{M}_{3}^{\left(1-\frac{\ln \Omega}{T} \times \frac{1}{|A|}\right)}\left(\left|X_{0}\right|+\|u_{0}\|_{\infty}\right)^{\left(1-\frac{\ln \Omega}{T} \times \frac{1}{|A|}\right)}.
%\end{align}
Therefore, from  \eqref{max1} and \eqref{expAexp} we arrive at
\begin{align}
	\nonumber|X(t)|+\|u(t)\|_{\infty}&\leq \max \left\{\overline{M}_{3}, 1\right\}\overline{M}_{2} \overline{M}_{3}^{\left(1-\frac{\ln \Omega}{T} \frac{1}{|A|}\right)}\\
	&\times\left(\left|X_{0}\right|+\|u_{0}\|_{\infty}\right)^{\left(2-\frac{\ln \Omega}{T} \frac{1}{|A|}\right)}  e^{\frac{\ln \Omega}{T} t}, 
\end{align} which gives \eqref{stability_result_u}.

We now prove well-posedness. In the interval $\left[0, t_0\right]$, where there is no control, the system described by \eqref{pde_representation0}--\eqref{pde_representation3}. The existence and uniqueness of solutions within this interval are ensured by the explicit solution to the ODE subsystem \eqref{pde_representation0} and the transport subsystem \eqref{pde_representation3}, thanks to the variation of constants formula and the characteristics method, respectively. These solutions depend only on $X_{0} \in \mathbb{R}^{n}$ and $u_{0} \in \mathcal{C}_{lpw}([0, D],\mathbb{R})$ and one has $X(t)\in AC\left([0,t_0], \mathbb{R}^{n}\right)$ and  $u\in \mathcal{C}_{lpw}([0,D]\times[0,t_0]), \mathbb{R})$. % for fixed time $t$ $u(\cdot,t)\in \mathcal{C}_{lpw}([0, D], \mathbb{R})$ and for fixed $x$ $u(x,\cdot)\in \mathcal{C}_{lpw}([0,t_0), \mathbb{R})$.\\
For $t>t_0$, the system described by \eqref{pde_representation}--\eqref{pde_representation1}, along with the quantized controller $U$, defined in \eqref{control_quantizer}, satisfies the assumptions outlined in \cite[Theorem 8.1]{karafyllis2021input}, with $F(X,u)=A X+B u(0)$ and $\varphi(\mu,u,X)=U(\mu,u,X)$. In particular, $U$ defined in \eqref{control_quantizer}, \eqref{predictor_quantizer} is locally Lipschitz in $(X, u)$, given the local Lipschitzness assumption of $q_1$ and $q_2$. Therefore, the initial conditions for each interval $I_i=\left[t_{0}+(i-1) T, t_{0}+i T\right],$ where $i=1,2\dots,$ satisfy  $X\left(t_{0}+(i-1) T\right)\in  \mathbb{R}^{n}$, $u\left(x,t_{0}+(i-1) T\right)\in \mathcal{C}_{lpw}([0, D], \mathbb{R}),$ and they are bounded due to \eqref{bound_in_time_t0} for $i=1$ and \eqref{norm_X_w} for $i \geqslant 2$, respectively. Then, the system \eqref{pde_representation}--\eqref{pde_representation1} with \eqref{control_quantizer}, given the initial conditions $X\left(t_{0}+(i-1) T\right)$ and $u(\cdot,t_0+(i-1)T)\in \mathcal{C}_{lpw}([0, D], \mathbb{R})$, where $i=1,2, \cdots$, admits a unique solution such that $X(t)\in AC\left(I_{i+1}, \mathbb{R}^{n}\right)$ and for each $t,$ $u(\cdot,t)\in \mathcal{C}_{lpw}([0, D], \mathbb{R})$, while for each $x,$ $u(x,\cdot)\in \mathcal{C}_{lpw}(I_{i+1}, \mathbb{R}).$ (This regularity of the solution is also obtained in \cite{espitia2017stabilization,KKnonlocal} in the context of transport PDE systems subject to sampling-data and quantization.) Therefore, using a proof by induction, we obtain the existence and uniqueness of a solution such that $X(t)$ is absolutely continuous in $[0,+\infty)$, while $u(\cdot,t)\in \mathcal{C}_{lpw}([0, D], \mathbb{R})$ and  $u(x,\cdot)\in \mathcal{C}_{lpw}(\mathbb{R}_{+}, \mathbb{R})$ for each $t$ and $x$, respectively.
{\hfill $\Box$ }

%%%%%%%%%%%%%%%%%%%%%%%%%%%%%%%%%%%%%%%%%%%%%%%%%%%%%%%%%%%%%%%%%%%%%%%%%%%%%%%%%%%%
%\textcolor{blue}{
\section{Extension to Input Quantization}\label{inputquantization}
When the control input is subjected to quantization, yet measurements of the actuator/ODE states are available, modifications to the switched predictor-feedback law are required. Specifically, the adaptation entails
 \begin{equation}\label{control_quantizerinput}
		U(t)=\left\{\begin{array}{ll}0, & 0 \leq t \leq \bar{t}_{0} \\  \bar{q}_{\mu}\left(U_{\rm nom}(t)\right), & t>\bar{t}_{0}
			\end{array}\right.,
	\end{equation} where $U_{\rm nom}(t)$ is given in \eqref{nominalU}.
The quantizer is a function $\bar{q}_{\mu}:\mathbb{R}\to \mathbb{R},$ defined by $\bar{q}_{\mu}(\bar{U})=\mu \bar{q}\left(\tfrac{\bar{U}}{\mu}\right)$, satisfying the following properties

%\hspace*{1cm}
$\bar{\rm P}1$: If $|\bar{U}| \leq M$, then $|\bar{q}(\bar{U})-\bar{U}| \leq \Delta$,\\
$\bar{\rm P}2$: If $|\bar{U}|>M$, then $|\bar{q}(\bar{U})|>M-\Delta$,\\
$\bar{\rm P}3$: If $|\bar{U}| \leq \hat{M}$, then $\bar{q}(\bar{U})=0$.

 The tunable parameter $\mu$ is selected as 
	\begin{equation}\label{switching_parameterinput}
		\mu(t)= \begin{cases}\overline{M}_1 \mathrm{e}^{2|A| j \tau} \mu_{0}, & (j-1) \tau \leq t \leq j \tau, \\ & 1 \leq j \leq\left\lfloor\frac{\bar{t}_{0}}{\tau}\right\rfloor+1,\\ \mu\left(\bar{t}_{0}\right), & t \in\left(\bar{t}_{0}, \bar{t}_{0}+T\right], \\ \Omega \mu\left(\bar{t}_{0}+(i-1) T\right), & t \in\left(\bar{t}_{0}+(i-1) T,\right. \\ & \left.\bar{t}_{0}+i T\right], \quad i=2,3, \ldots\end{cases},
	\end{equation} for some fixed, yet arbitrary, $\tau, \mu_0>0$, with $\bar{t}_{0}$ being the first time instant at which the following holds 
	\begin{equation}\label{bound_in_time_t0input}
		|X(\bar{t}_{0})|+\|u(\bar{t}_0)\|_{\infty} \leq  \dfrac{M \overline{M}}{M_3}\mu(\bar{t}_0).
	\end{equation}	
Note that this event can be detected as measurements of the actuator/ODE states are available. The parameters involved in \eqref{switching_parameterinput}, \eqref{bound_in_time_t0input} are defined in \eqref{M3}--\eqref{T}.
	\begin{Theo}\label{Theorem2}
		Consider the closed-loop system consisting of the plant \eqref{pde_representation}--\eqref{pde_representation1} and the switched predictor-feedback law \eqref{control_quantizerinput},    \eqref{switching_parameterinput} with \eqref{nominalU}. Let the pair $(A, B)$ be stabilizable. If $\Delta$ and $M$ satisfy  \begin{equation}\label{conditionMDeltainput}
				\frac{\Delta}{M}<\frac{M_2}{M_3(1+\lambda)(1+M_0)^2},
			\end{equation}	then for all $X_{0} \in \mathbb{R}^{n}$; $u_{0} \in \mathcal{C}_{lpw}([0, D], \mathbb{R})$, there exists a unique solution such that $X(t) \in AC\left(\mathbb{R}_{+}, \mathbb{R}^{n}\right)$ and for each $t \in \mathbb{R}_{+}$ $u(\cdot, t) \in \mathcal{C}_{lpw}([0,D], \mathbb{R})$ and for each $x \in[0,D]$ $u(x,\cdot) \in \mathcal{C}_{\text {lpw }}\left(\mathbb{R}_{+}, \mathbb{R}\right)$, which satisfies
		\begin{align}
				&|X(t)|+\left\|u(t)\right\|_{\infty}
				\label{stability_result_uinput} \leq  \bar{\gamma}\left(|X_0|+\left\|u_{0}\right\|_{\infty}\right)^{\left(2-\frac{\ln \Omega}{T} \frac{1}{|A|}\right)} \mathrm{e}^{\frac{\ln \Omega}{T}t},
			\end{align}	where		
			\begin{align}
			\nonumber	\bar{\gamma} &=\frac{\overline{M}_1}{M_2}\max \left\{\frac{M_{2}M}{\Omega M_3} e^{2|A| \tau} \mu_{0}, M_1\right\} \max \left\{\frac{M_3}{\mu_0M \overline{M}}, 1\right\}\\
			&
			\times \left(\frac{M_3}{\mu_0M \overline{M}}\right)^{\left(1-\frac{\ln \Omega}{T} \frac{1}{|A|}\right)}.
		\end{align} 
	\end{Theo}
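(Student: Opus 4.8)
The plan is to mirror the two-phase strategy of Theorem~\ref{Theorem1}, exploiting that the backstepping transformation \eqref{backstepping_direct_transformation} and the resulting target system \eqref{targetsystem_without_quantizer}--\eqref{targetsystem_without_quantizer3} are insensitive to how the input $U$ is produced: only the boundary disturbance $w(D,t)=U(t)-U_{\rm nom}(t)$ is affected. For the input quantizer \eqref{control_quantizerinput} this disturbance is the scalar $d(t)=\mu(t)\bigl(\bar{q}(U_{\rm nom}(t)/\mu(t))-U_{\rm nom}(t)/\mu(t)\bigr)$, replacing the state-quantization expression \eqref{d1}. First I would reprove the analogue of Lemma~\ref{Lemma1}. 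On the open-loop window $0\le t\le\bar{t}_0$ one has $U\equiv 0$, so the estimate \eqref{normX0t0}, $|X(t)|+\|u(t)\|_\infty\le \overline{M}_1 e^{|A|t}(|X_0|+\|u_0\|_\infty)$, holds verbatim; since $\mu$ grows like $e^{2|A|t}$ while the state grows at most like $e^{|A|t}$, the threshold \eqref{bound_in_time_t0input} is crossed at some finite $\bar{t}_0$ with an explicit logarithmic bound in which the factor $M\overline{M}-2\Delta$ of \eqref{t0} is replaced by $M\overline{M}/M_3$. Here, because the ODE/actuator states are measured directly, the detected event \emph{is} the sought bound \eqref{bound_in_time_t0input}, so the P1/P2 case-split used in Lemma~\ref{Lemma1} to pass from detection to a norm bound is unnecessary.

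Next I would establish the analogue of Lemma~\ref{Lemma2}, in which the small-gain, ISS and fading-memory machinery \eqref{solution_X}--\eqref{norm_X_u_T} transfers unchanged once $|d|$ is bounded. The single new computation is the disturbance estimate. Using $|U_{\rm nom}|\le M_3(|X|+\|u\|_\infty)$ (which is exactly why $M_3$ in \eqref{M3} is the governing constant) together with the left inequality in \eqref{equivalence}, $\|(X,u)\|\le M_2^{-1}\|(X,w)\|$, I would show that on the region
\begin{equation*}
\Omega\frac{M_2}{(1+M_0)^2 M_3}M\mu\le |X|+\|w\|_\infty\le \frac{M_2 M}{M_3}\mu,
\end{equation*}
the upper bound forces $|U_{\rm nom}/\mu|\le M$, so property $\bar{\rm P}1$ gives $|d|\le\Delta\mu$, while the lower bound together with the definition \eqref{Omega} of $\Omega$ yields $|d|\le\Delta\mu\le \frac{(1+M_0)^2 M_3\Delta}{\Omega M_2 M}(|X|+\|w\|_\infty)=\frac{1}{1+\lambda}(|X|+\|w\|_\infty)$. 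This is precisely the bound used in the chain \eqref{d}--\eqref{normw}, so the subsequent steps \eqref{norm_x_w}--\eqref{norm_X_u_T}, including the invariance argument around \eqref{eq:t0*}--\eqref{eq:t01}, go through with $M$ rescaled to $M/M_3$, giving contraction of the $(X,w)$-norm by the factor $\Omega$ over each window of length $T$.

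With both lemmas available, the concatenation is identical to the proof of Theorem~\ref{Theorem1}: the hypothesis \eqref{hyplemma2}, with $M$ replaced by $M/M_3$, holds at $\bar{t}_0$ by \eqref{bound_in_time_t0input} and \eqref{equivalence}; each application of the closed-loop lemma divides the $(X,w)$-norm by $\Omega$ as $\mu$ is multiplied by $\Omega$, so after $i$ windows one gets $|X(t)|+\|w(t)\|_\infty\le \Omega^{(t-\bar{t}_0)/T}\,\tfrac{M_2 M}{\Omega M_3}\mu(\bar{t}_0)$. Inserting the open-loop bounds on $\mu(\bar{t}_0)$ and $e^{|A|\bar{t}_0}$, converting back to the $(X,u)$-norm via \eqref{equivalence}, and collecting constants into $\bar{\gamma}$ produces \eqref{stability_result_uinput}. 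Well-posedness follows as before: $U=\bar{q}_\mu(U_{\rm nom})$ is locally Lipschitz in $(X,u)$, being the composition of the locally Lipschitz $\bar{q}$ with the map $(X,u)\mapsto U_{\rm nom}$ that is linear in its arguments, so \cite[Theorem~8.1]{karafyllis2021input} applies on each interval $I_i$ and an induction yields the stated regularity.

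The step I expect to be the main obstacle is the correct relocation of the constant $M_3$. In the state-quantization proof $M_3$ inflates the disturbance directly, $|d|\le M_3\Delta\mu$ in \eqref{d}; in the input case it must instead \emph{shrink} the admissible region, since invoking $\bar{\rm P}1$ first requires $|U_{\rm nom}/\mu|\le M$, i.e. $\|(X,w)\|\le M_2 M/M_3\,\mu$ rather than $M_2 M\,\mu$. Keeping this rescaling consistent across the event threshold \eqref{bound_in_time_t0input}, the hypothesis analogous to \eqref{hyplemma2}, and the region condition is exactly what makes \eqref{conditionMDeltainput} equivalent to $\Omega<1$; checking that these three rescaled quantities interlock so that the small-gain closure still reproduces $|d|\le(1+\lambda)^{-1}(|X|+\|w\|_\infty)$ is the delicate point, whereas the ISS and fading-memory estimates themselves need no change.
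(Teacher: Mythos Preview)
Your proposal is correct and follows essentially the same route as the paper: the paper also reduces Theorem~\ref{Theorem2} to input-quantization analogues of Lemmas~\ref{Lemma1} and~\ref{Lemma2} (its Lemmas~\ref{Lemma3} and~\ref{Lemma4}), with exactly the disturbance $\bar d=U_{\rm nom}-\bar q_\mu(U_{\rm nom})$, the rescaled region $\frac{\Omega}{(1+M_0)^2}\frac{M_2}{M_3}M\mu\le |X|+\|w\|_\infty\le \frac{M_2}{M_3}M\mu$, and the direct detection of \eqref{bound_in_time_t0input} without the P1/P2 case split. Your explicit justification that the upper region bound forces $|U_{\rm nom}/\mu|\le M$ (so that $\bar{\rm P}1$ applies) is in fact more detailed than the paper's presentation, which leaves this step implicit.
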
	
	The proof of Theorem~\ref{Theorem2} relies also on two lemmas, whose proofs utilize analogous arguments to those applied to the case of measurements' quantization.
	\vspace{.2cm}		
	\begin{Lem}\label{Lemma3}
	There exists a time $\bar{t}_0$ satisfying 
		\begin{equation}\label{t0input}
				\bar{t}_{0} \leqslant \frac{1}{|A|} \ln\left(\dfrac{\frac{M_3}{\mu_{0}}\left(\left|X_{0}\right|+\left\|u_{0}\right\|_{\infty}\right)}{M \overline{M}}\right),
		\end{equation} 
		such that \eqref{bound_in_time_t0input} holds.		
	\end{Lem}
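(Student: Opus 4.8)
The plan is to follow the argument of Lemma~\ref{Lemma1} in a streamlined form: because the plant and actuator states are measured directly, the triggering event \eqref{bound_in_time_t0input} is expressed purely in terms of the norm $|X|+\|u\|_\infty$, so none of the quantizer-property manipulations (P1, P2) used in Lemma~\ref{Lemma1} are needed here. First I would note that, for $0 \le t \le \bar{t}_0$, the controller \eqref{control_quantizerinput} gives $U(t)=0$, so the closed loop reduces to the open-loop cascade \eqref{pde_representation0}--\eqref{pde_representation3} already analyzed in the proof of Lemma~\ref{Lemma1}. As there, the method of characteristics yields $\|u(t)\|_\infty \le \|u_0\|_\infty$, while the variation-of-constants formula applied to the ODE subsystem gives $|X(t)| \le \overline{M}_1 e^{|A|t}(|X_0|+\|u_0\|_\infty)$ with $\overline{M}_1$ as in \eqref{Mbar}; adding these reproduces the open-loop estimate $|X(t)|+\|u(t)\|_\infty \le \overline{M}_1 e^{|A|t}(|X_0|+\|u_0\|_\infty)$ for $0 \le t \le \bar{t}_0$.

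Next I would normalize by $\mu(t)$. On the zoom-out branch of \eqref{switching_parameterinput}, for $t \in [(j-1)\tau, j\tau]$ one has $\mu(t)=\overline{M}_1 e^{2|A|j\tau}\mu_0$, and since $j\tau \ge t$ on this interval this yields the lower bound $\mu(t) \ge \overline{M}_1 e^{2|A|t}\mu_0$. Dividing the open-loop estimate by $\mu(t)$ then gives
\begin{equation}
\frac{|X(t)|+\|u(t)\|_\infty}{\mu(t)} \le \frac{|X_0|+\|u_0\|_\infty}{e^{|A|t}\mu_0}.
\end{equation}
The right-hand side falls to the fixed threshold $M\overline{M}/M_3$ exactly when $t$ reaches the value on the right-hand side of \eqref{t0input}; at that time the event \eqref{bound_in_time_t0input} is satisfied, and since the first crossing instant $\bar{t}_0$ cannot exceed any time at which the event holds, $\bar{t}_0$ obeys the bound \eqref{t0input}.

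The essential mechanism, which I would regard as the crux, is the exponential-rate gap: the switching law inflates $\mu$ at rate $e^{2|A|t}$, twice the $e^{|A|t}$ growth of the open-loop solution, so after normalization the quantity decays like $e^{-|A|t}$ and is guaranteed to fall below the constant threshold $M\overline{M}/M_3$ in finite time. The only care needed is to use the lower bound $\mu(t)\ge \overline{M}_1 e^{2|A|t}\mu_0$ (rather than the step-indexed value) so that the crossing time is expressed directly in $t$ and the first crossing is seen to respect \eqref{t0input}; everything else is a verbatim reuse of the open-loop estimates from Lemma~\ref{Lemma1}.
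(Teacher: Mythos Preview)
Your proposal is correct and follows essentially the same route as the paper's own proof: derive the open-loop bound $|X(t)|+\|u(t)\|_\infty \le \overline{M}_1 e^{|A|t}(|X_0|+\|u_0\|_\infty)$ exactly as in Lemma~\ref{Lemma1}, then use the zoom-out growth of $\mu$ from \eqref{switching_parameterinput} to conclude that \eqref{bound_in_time_t0input} is met by a time satisfying \eqref{t0input}. Your additional remarks---the explicit lower bound $\mu(t)\ge \overline{M}_1 e^{2|A|t}\mu_0$, the rate-gap explanation, and the observation that the quantizer properties P1, P2 play no role here because the triggering event is stated directly in the measured norm---are all correct elaborations of what the paper leaves implicit.
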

	\begin{proof}	
		For all $0\le t\le \bar{t}_{0}$, the system is given by \eqref{pde_representation}--\eqref{pde_representation1} with $U(t)=0$.	Using the method of characteristics and the variation of constants formula for $ 0\le t\le \bar{t}_{0}$ we obtain exactly as in the proof
		of Lemma \ref{Lemma1}
		\begin{equation}\label{normX0t0input}
			|X(t)|+\|u(t)\|_{\infty} \leqslant \overline{M}_1e^{|A| t}\left(\left|X_{0}\right|+\left\|u_{0}\right\|_{\infty}\right).
		\end{equation} Choosing the switching signal $\mu$ according to \eqref{switching_parameterinput}, one has the existence of a time $\bar{t}_{0}$ verifying \eqref{t0input}  such that the relation \eqref{bound_in_time_t0input} holds.
    \end{proof}
	\begin{Lem}\label{Lemma4}
		Choose $K$ such that $A+BK$ is Hurwitz and let $\sigma,M_{\sigma}>0$ be such that \eqref{expABK} holds. Select $\lambda$ large enough in such a way that the small-gain condition \eqref{small_gain} holds. Then the solutions to the target system \eqref{targetsystem_without_quantizer}--\eqref{targetsystem_without_quantizer3} with the quantized controller \eqref{nominalU}, \eqref{control_quantizerinput}, \eqref{switching_parameterinput}, which verify, for fixed $\mu$,
		\begin{equation}\label{hyplemma4input}
				|X(\bar{t}_{0})|+\|w(\bar{t}_{0})\|_{\infty}\le \frac{M_{2}M\mu}{(1+M_0)M_3},
			\end{equation} they satisfy for $\bar{t}_0<t\le \bar{t}_0+T$
		\begin{align}
				\nonumber& |X(t)|+\|w(t)\|_{\infty}\leqslant \max\left\{ M_{0} e^{-\delta (t-\bar{t}_{0})}\left(\left|X(\bar{t}_{0})\right| \right.\right.\\
				&\left. +\left\|w(\bar{t}_{0})\right\|_{\infty}\right),\left.  \frac{\Omega M_{2}M\mu }{(1+M_0)M_3}  \right\}.\label{norm_X_winput}
			\end{align} In particular, the following holds
		\begin{align}\label{normXu1input}
				|X(\bar{t}_{0}+T)|+\|w(\bar{t}_{0}+T)\|_{\infty} \leq  \frac{\Omega M_{2}M\mu}{(1+M_0)M_3}.
			\end{align}
	\end{Lem}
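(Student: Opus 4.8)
The plan is to mirror the proof of Lemma~\ref{Lemma2} exactly, the only structural change being the form of the boundary disturbance entering the transport PDE. Here the applied input is the quantized nominal predictor $U(t)=\bar q_\mu(U_{\rm nom}(t))$ from \eqref{control_quantizerinput}, \eqref{nominalU}, so the backstepping target's boundary term \eqref{targetsystem_without_quantizer3} becomes
\begin{equation*}
  d(t)=U(t)-U_{\rm nom}(t)=\mu\Bigl(\bar q\bigl(\tfrac{U_{\rm nom}(t)}{\mu}\bigr)-\tfrac{U_{\rm nom}(t)}{\mu}\Bigr).
\end{equation*}
Since the ODE subsystem \eqref{targetsystem_without_quantizer} and the transport subsystem \eqref{targetsystem_without_quantizer2}, \eqref{targetsystem_without_quantizer3} are identical to the state-quantization case, I would reuse verbatim (with $t_0$ replaced by $\bar{t}_0$) the fading-memory estimates \eqref{eqXw2} for $X$ (from \eqref{solution_X}, \eqref{expABK}) and \eqref{eqWd} for $w$ (from the sup-norm ISS bound \eqref{wmu1d}), together with the weighted norms \eqref{defnormw}, \eqref{defnormX}. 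These give the same coupled inequalities \eqref{norm_x_w} and \eqref{norm_w_0_t}, with the supremum of $|d(s)|e^{\delta(s-\bar{t}_0)}$ now referring to the input-quantization error above.

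The crux is the analogue of the disturbance estimate leading to \eqref{condition}. From \eqref{nominalU} and \eqref{M3} one has the scalar bound $|U_{\rm nom}|\le M_3\|(X,u)\|$, and by the left inequality in \eqref{equivalence}, $\|(X,u)\|\le M_2^{-1}(|X|+\|w\|_\infty)$. Hence, whenever $|X|+\|w\|_\infty\le\tfrac{M_2M}{M_3}\mu$, one gets $|U_{\rm nom}/\mu|\le M$, so property $\bar{\rm P}1$ yields $|d|\le\Delta\mu$; note that, in contrast with Lemma~\ref{Lemma2} where the error carried a factor $M_3$, here the scalar quantizer acts directly on $U_{\rm nom}$ and the error is simply $\Delta\mu$, while $M_3$ instead enters the \emph{range} condition, which is exactly why the region bounds and the hypothesis \eqref{hyplemma4input} are scaled by $1/M_3$ relative to Lemma~\ref{Lemma2}. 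I would then impose the region
\begin{equation*}
  \frac{\Omega M_2 M\mu}{(1+M_0)^2M_3}\le |X|+\|w\|_\infty\le \frac{M_2 M\mu}{M_3},
\end{equation*}
so that on the lower side $\Delta\mu\le\tfrac{(1+M_0)^2M_3}{\Omega M_2 M}\Delta(|X|+\|w\|_\infty)=\tfrac{1}{1+\lambda}(|X|+\|w\|_\infty)$ by the definition \eqref{Omega} of $\Omega$. This reproduces the bound $|d|\le\tfrac{1}{1+\lambda}(|X|+\|w\|_\infty)$ exactly as in Lemma~\ref{Lemma2}.

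With this bound in hand the remainder is a transcription. The small-gain combination of \eqref{norm_x_w} and \eqref{norm_w_0_t} proceeds with the same constants $\phi,\varphi_1$ from \eqref{phi and psy} and the same overshoot $M_0$ from \eqref{M0} (these depend only on $\varepsilon,\nu,\lambda,M_\sigma,\sigma,D,|B|$, not on $\Delta,M,M_3$), yielding the weighted bound \eqref{eqXwM0} and hence the unweighted decay \eqref{norm_X_u1}, valid as long as the solution stays in the region above. Using the hypothesis \eqref{hyplemma4input} and $M_0/(1+M_0)<1$ I would verify that the upper range bound $|X|+\|w\|_\infty\le M_2M\mu/M_3$ is preserved; evaluating \eqref{norm_X_u1} at $t=\bar{t}_0+T$ with $e^{-\delta T}=\Omega/(1+M_0)$ from \eqref{T} gives the contraction \eqref{normXu1input}. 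Finally, to obtain the max-form \eqref{norm_X_winput} I would repeat the exit-time argument of Lemma~\ref{Lemma2}: if the solution first meets the lower region boundary $\Omega M_2 M\mu/((1+M_0)^2M_3)$ at some $\bar{t}_0^*\le \bar{t}_0+T$, it remains trapped below $\Omega M_2 M\mu/((1+M_0)M_3)$ thereafter, and combining this with \eqref{norm_X_u1} yields \eqref{norm_X_winput}.

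The main obstacle is the bookkeeping in the second paragraph: correctly tracking where the factor $M_3$ appears. One must recognize that input quantization shifts $M_3$ from the error estimate (as in Lemma~\ref{Lemma2}) to the range/hypothesis, so that the definition \eqref{Omega} of $\Omega$ still produces precisely the coefficient $1/(1+\lambda)$ needed to close the small-gain loop; everything downstream is then identical to Lemma~\ref{Lemma2}.
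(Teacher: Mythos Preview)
Your proposal is correct and follows essentially the same route as the paper: you identify the boundary disturbance as $d(t)=\bar q_\mu(U_{\rm nom})-U_{\rm nom}$, use $|U_{\rm nom}|\le M_3\|(X,u)\|\le (M_3/M_2)(|X|+\|w\|_\infty)$ together with $\bar{\rm P}1$ and \eqref{Omega} to obtain $|d|\le\tfrac{1}{1+\lambda}(|X|+\|w\|_\infty)$ on the region $\tfrac{\Omega M_2M\mu}{(1+M_0)^2M_3}\le|X|+\|w\|_\infty\le\tfrac{M_2M\mu}{M_3}$, and then close the argument exactly as in Lemma~\ref{Lemma2}. Your observation that $M_3$ migrates from the error estimate to the range condition is precisely the point, and the paper handles it the same way.
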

\begin{proof}
For $\bar{t}_0< t\le \bar{t}_0+T,$ the system is defined by \eqref{pde_representation}--\eqref{pde_representation1} under the switched predictor-feedback law \eqref{nominalU}, \eqref{control_quantizerinput}, \eqref{switching_parameterinput}. Using the same strategy, as in the proof of  Lemma~\ref{Lemma2}, i.e., combining the variation of constants  formula, the ISS estimate in sup-norm in \cite[estimate (2.23)]{KKnonlocal} (see also \cite[estimate (3.2.11)]{karafyllis2021input}), and the fading memory lemma  \cite[Lemma 7.1]{karafyllis2021input}, for every $\nu, \varepsilon>0$ satisfying \eqref{small_gain2}, there exists $\delta \in (0, \min\{\sigma,\nu\})$ such that, using the definitions \eqref{defnormw} and \eqref{defnormX}, the following inequalities hold
	\begin{align}\label{norm_x_winput}
		|X|_{[\bar{t}_0, t]} \leqslant M_{\sigma}|X(\bar{t}_0)|+(1+\varepsilon) \frac{M_{\sigma}}{\sigma}|B|\|w\|_{[\bar{t}_0, t]},
	\end{align}	and
	\begin{align}
		\nonumber\|w\|_{[\bar{t}_0, t]}&\le e^{D\left(\nu+1\right)}(1+\varepsilon) \sup _{\bar{t}_0\leqslant s \leqslant t}\left(\left|\bar{d}(s)\right| e^{\delta (s-\bar{t}_0)}\right) \label{norm_w_0_tinput} \\
		&+e^{D}\left\|w(\bar{t}_0)\right\|_{\infty},
	\end{align}	where
\begin{align}
	 \bar{d}(t)=U_{\rm nom}(t)-\mu(t)\bar{q}\left(\dfrac{U_{\rm nom}(t)}{\mu(t)}\right) ,	\label{d1input}
\end{align} with $ U_{\rm nom}$ and $\mu$ given in \eqref{nominalU} and \eqref{switching_parameterinput}, respectively. Next, let us proceed to approximate the term $\displaystyle\sup_{\bar{t}_0 \leqslant s\leqslant t}\left(\left|\bar{d}(s)\right| e^{\delta (s-\bar{t}_0)}\right)$ in \eqref{norm_w_0_tinput} (for fixed $\mu$).   
	%From \eqref{d1input} we get for $\bar{t}_0<t\le \bar{t}_0+T$
%\begin{align}
%	|d|=\mu\left|\bar{q}\left(\dfrac{U_{\rm nom}(t)}{\mu(t)}\right)-\dfrac{U_{\rm nom}(t)}{\mu(t)}\right|\label{dinput}.
%\end{align}
Provided that 
\begin{align}
	\label{conditioninput}
	\frac{\Omega}{(1+M_0)^2}\frac{M_{2}}{M_3} M\mu \leq|X|+\|w\|_{\infty} \leqslant \frac{M_{2}}{M_3} M\mu,
\end{align}
	using the property $\rm \bar{P}1$ of the quantizer, the left-hand side of bound \eqref{equivalence}, and the definition \eqref{Omega}, we obtain \begin{align}
		\nonumber	\left|\bar{d}\right|&=\mu\left|\bar{q}\left(\dfrac{U_{\rm nom}(t)}{\mu}\right)-\dfrac{U_{\rm nom}(t)}{\mu}\right|  \\
	\nonumber	& \le    \Delta\mu          \\
		\nonumber	&\leqslant \frac{(1+M_0)^2 M_{3}\Delta}{\Omega M_2M} \left(|X|+\|w\|_{\infty}\right) \\
		& \leqslant \frac{1}{1+\lambda}\left(|X|+\|w\|_{\infty}\right). 
		%	 \sup _{0 \leqslant s\leqslant t}|d(t)| &\leqslant \frac{1}{1+\lambda}\left(|X|+\|w\|_{\infty}\right)
	\end{align}	Therefore, as long as the solutions satisfy \eqref{conditioninput} we get
	\begin{equation}
		\sup_{\bar{t}_0 \leqslant s \leqslant t}\left(\left|\bar{d}(s)\right| e^{\delta (s-\bar{t}_0)}\right) \leqslant \frac{1}{1+\lambda}\|w\|_{[\bar{t}_0, t]}+\frac{1}{1+\lambda}|X|_{[\bar{t}_0, t]}.
	\end{equation}
	Hence, using \eqref{norm_w_0_tinput} and the fact that \eqref{small_gain1} holds we obtain
	\begin{align}
		\|w\|_{[\bar{t}_0, t]}\leqslant\left(1-\phi\right)^{-1}e^D\left\|w(\bar{t}_0)\right\|_{\infty}+\left(1-\phi\right)^{-1}\phi |X|_{[\bar{t}_0, t]} ,\label{normwinput}
	\end{align}
	with $\phi=\frac{1+\varepsilon}{1+\lambda}e^{D\left(\nu+1\right)}<1.$
	Combining the inequalities \eqref{norm_x_winput} and \eqref{normwinput}, thanks to the definitions \eqref{defnormw}, \eqref{defnormX}, and to the small-gain condition \eqref{small_gain}, repeating the respective arguments from the proof of Lemma \ref{Lemma2}, we arrive at
	\begin{equation}\label{norm_X_u1input}
		|X(t)|+\|w(t)\|_{\infty}\leqslant M_0e^{-\delta (t-\bar{t}_0)}\left(|X(\bar{t}_0)|+\left\|w(\bar{t}_0)\right\|_{\infty}\right). 
	\end{equation}
	%This last inequality implies for $\bar{t}_0<t<\bar{t}_0+T$: 
	%\begin{equation}
	%	\label{norm_X_u1}|X(t)|+\|w\|_{\infty} \leqslant  M_{0} e^{-\delta (t-\bar{t}_0)}\left(\left|X(\bar{t}_0)\right|+\left\|w(\bar{t}_0)\right\|_{\infty}\right) .
	%\end{equation}
	For $\bar{t}_0<t \leq \bar{t}_0 +T $, using relation \eqref{hyplemma4input}, the fact that $e^{-\delta\left(t-\bar{t}_0\right)}\le 1$,  and $\frac{M_0}{1+M_0}<1$ one has that
	\begin{align}
		|X(t)|+\|w(t)\|_{\infty} 
		\label{forMaxTerminput}	& \le \frac{M_2}{M_3}M\mu, %\label{norm90}
	\end{align} which makes estimate \eqref{norm_X_u1input} legitimate. Moreover, at the time instant $\bar{t}_0+T$, thanks to the relation \eqref{hyplemma4input} and the definition \eqref{T} of $T,$ one obtains from \eqref{norm_X_u1input} that \begin{align}
		\left|X\left(\bar{t}_0+T\right)\right|+\left\|w\left(\bar{t}_0+T\right)\right\|_{\infty} \label{norm_X_u_Tinput}&\leqslant \Omega \frac{M_2}{(1+M_0)M_3}M\mu, 
	\end{align}  and hence, bound \eqref{norm_X_winput} is obtained using \eqref{normwinput}.
%Note that relations \eqref{norm_X_u1input} and \eqref{norm_X_u_Tinput} are established provided that $\Omega \frac{M_{2}}{(1+M_0)^2M_3} M\mu \leq|X|+\|w\|_{\infty}$. If there exists a time $\bar{t}_0^*$ such that $\bar{t}_0\le \bar{t}_0^*\le \bar{t}_0+T,$ at which the solutions satisfy \begin{align}\label{eq:t0*input}
%		\left|X\left(\bar{t}_0^{*}\right)\right|+\left\|w\left(\bar{t}_0^{*}\right)\right\|_{\infty} & \leqslant \Omega
%		\frac{M_{2}}{(1+M_0)^2M_3} M\mu,
%	\end{align} then they also satisfy for $\bar{t}_0^{*}\le t\le \bar{t}_0+T$
%	\begin{align}\label{eq:t01input}
%		|X(t)|+\|w(t)\|_{\infty}\leqslant  \Omega
%		\frac{M_{2}}{(1+M_0)M_3} M\mu.
%	\end{align} 
The rest of the proof utilizes the same reasoning as the proof of Lemma~\ref{Lemma2}.
\end{proof}
{\em Proof of Theorem~\ref{Theorem2}:} The method used to prove Theorem~\ref{Theorem2} closely follows the method employed for the corresponding part of the proof of Theorem~\ref{Theorem1}, utilizing Lemmas~\ref{Lemma3} and~\ref{Lemma4}, in correspondence to Lemmas \ref{Lemma1} and \ref{Lemma2}.
%%%%%%%%%%%%%%%%%%%%%%%%%%%%%%%%%%%%%%%%%%%%%%%%%%%%%%%%%
\section{Simulation Results}\label{simulation}
To illustrate Theorem~\ref{Theorem1}, we consider system \eqref{pde_representation}--\eqref{pde_representation1} with
\begin{equation}
	A = \begin{bmatrix}
		-1 & 1 \\
		0 & 1
	\end{bmatrix}; \quad B = \begin{bmatrix}
		0 \\
		1
	\end{bmatrix},
\end{equation} and initial conditions \(X_0 = (10\quad 0)^T\) and \(u_0(x) = 10\), for all \(x \in [0, D]\). For \(K = [0 \quad -3]\), the matrix \(A + BK\) is Hurwitz so that for \(\sigma = 1\) and \(M_{\sigma} = 0.5\), the relation \eqref{expABK} is satisfied. One has \(M_1 = 4.5\) and \(M_2 = 0.2\). The constant parameters \eqref{M3}--\eqref{T} are given by \(\overline{M} = 0.6\), \(\overline{M}_1 = 2\), \(\Omega = 0.63\), and \(T = 2\). Choosing \(\lambda = 12\), the small-gain condition \eqref{small_gain} is verified. The quantizer is defined component-wise for each \(x \in [0, D]\) as
\begin{equation}\label{quantizer_matrix}
	q_{\mu}\left(\begin{bmatrix}X_1\\X_2
	\end{bmatrix},u\right)=\left(\begin{bmatrix}
	\mu q\left(\dfrac{X_1}{\mu}\right) \quad \mu q\left(\dfrac{X_2}{\mu}\right)
\end{bmatrix}^T,\mu q\left(\dfrac{u}{\mu}\right) \right),
\end{equation}
with \begin{align}
	q\left(\frac{u(x)}{\mu}\right)=\left\{\begin{array}{rr}
		M, & \frac{u(x)}{\mu}>M \\
		-M, & \frac{u(x)}{\mu}<-M \\
		\Delta\left\lfloor\frac{u(x)}{\mu \Delta}+\frac{1}{2}\right\rfloor, & -M \leq \frac{u(x)}{\mu} \leq M
	\end{array},\right.\label{quantizer_simus}
\end{align}
where $M=2$ and  $\Delta=\dfrac{M}{100}$, and the switching signal $\mu$ is updated according to \eqref{switching_parameter}.

The relation $\left| \begin{bmatrix}
	\mu(t_0) q\left(\dfrac{X_1(t_0)}{\mu(t_0)}\right) \quad \mu(t_0) q\left(\dfrac{X_2(t_0)}{\mu(t_0)}\right)
\end{bmatrix} \right|+\left\|\mu(t_0)q\left(\dfrac{u(t_0)}{\mu(t_0)}\right) \right\|_{\infty} \leq (M\overline{M} - \Delta)\mu(t_0)$ holds for $t_0=0$ showing that event \eqref{first_time_t0} is detected with the initial conditions $(X_0, u_0)$. %\enquote{capture} is guaranteed, i.e.,.
%At time $t_0 = 0.17$ \enquote{capture} is guaranteed, i.e.,
%event \eqref{first_time_t0} is detected, through verifyine0 
%relatie,
%$\left| \begin{bmatrie$ce
%	\mu(t_0) q\left(\dfrac{X_1(t_0)}{\mu(t_0)}\right) \quad \mu(t_0) q\left(\dfrac{X_2(t_0)}{\mu(t_0)}\right)
%\end{bmatrix} \right|+\left\|\mu(t_0)q\left(\dfrac{u(t_0)}{\mu(t_0)}\right) \right\|_{\infty} \leq (M\overline{M} - \Delta)\mu(t_0)$
%holds.
\begin{figure*}[t]
	\centering{
		\subfigure{\includegraphics[width=1\columnwidth]{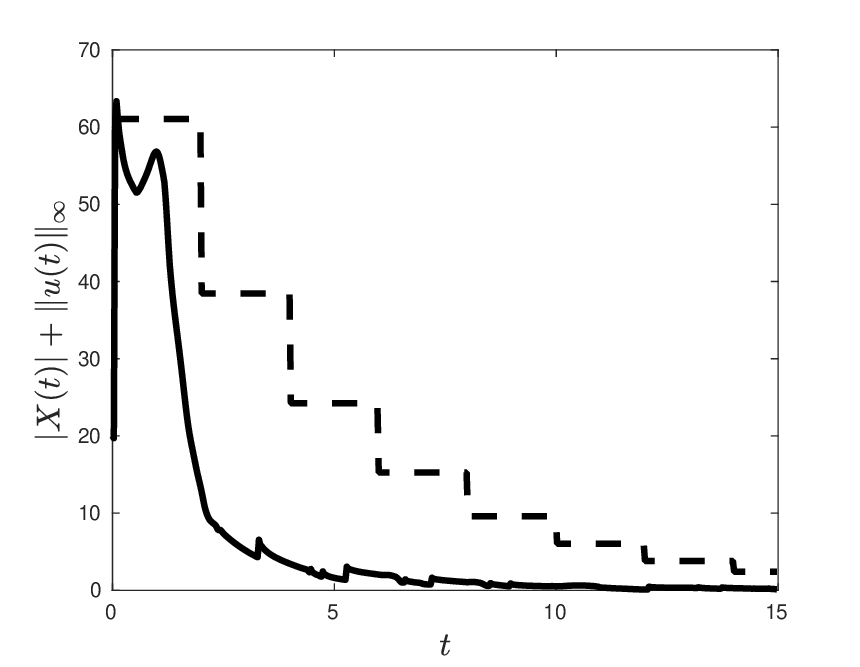} }
		\subfigure{\includegraphics[width=1\columnwidth]{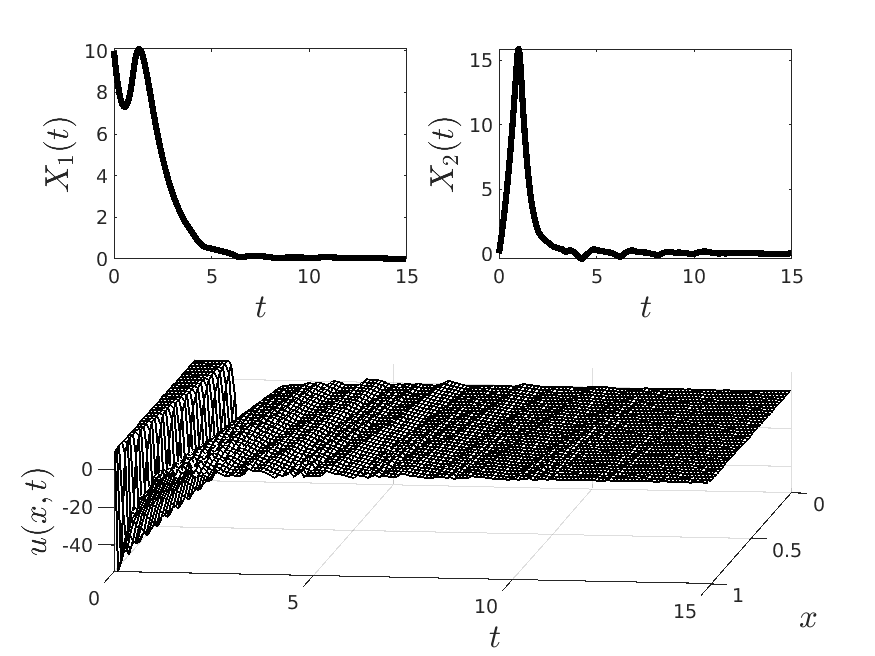} }
		\caption{Left: The norm $|X(t)| + \|u(t)\|_{\infty}$ of the closed-loop system \eqref{pde_representation}--\eqref{pde_representation1},       \eqref{quantizer_matrix}, \eqref{quantizer_simus},  for $D = 1$, $M = 2$, $\Delta =\dfrac{M}{100}$, under the predictor-feedback law \eqref{control_quantizer}--\eqref{switching_parameter}, \eqref{M3}--\eqref{T}, with parameters $\overline{M}=0.6,$  $\overline{M}_1=2,$  $\Omega=0.63,$ $T=2,$ and $\mu_0=1.$ The dashed line is the switching signal $\mu(t)M\overline{M}$.
			 Right: The respective states of the closed-loop system.}
		\label{dynamic_quantizer}
	}
\end{figure*}
We present in Figure~\ref{dynamic_quantizer} the norm of the state $(X,u)$ of
the closed-loop system along with the switching signal $\mu(t)M\overline{M}$, as well as the response of the ODE/actuator states. Specifically, for $x=D$, we display the control input
signal. The response $u$ of the closed-loop system is computed numerically using a Lax-Friedrichs scheme with time
and spatial discretization steps set to $0.01$ and $0.02$, respectively. The integral involved in the backstepping controller \eqref{control_quantizer} is numerically computed using the trapezoidal
rule. 
\begin{figure*}[t]
	\centering{
		\subfigure{\includegraphics[width=1\columnwidth]{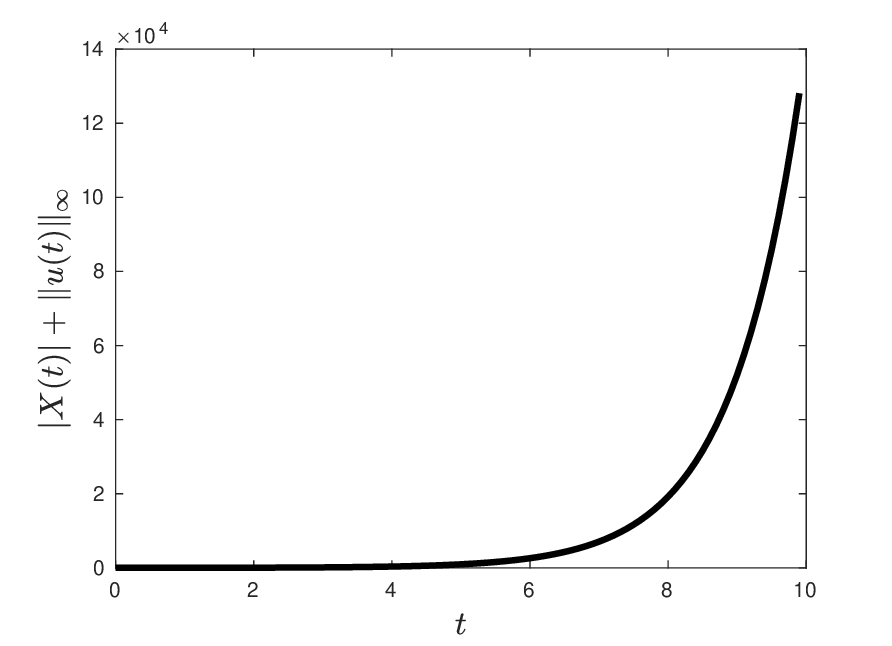} }
		\subfigure{\includegraphics[width=1\columnwidth]{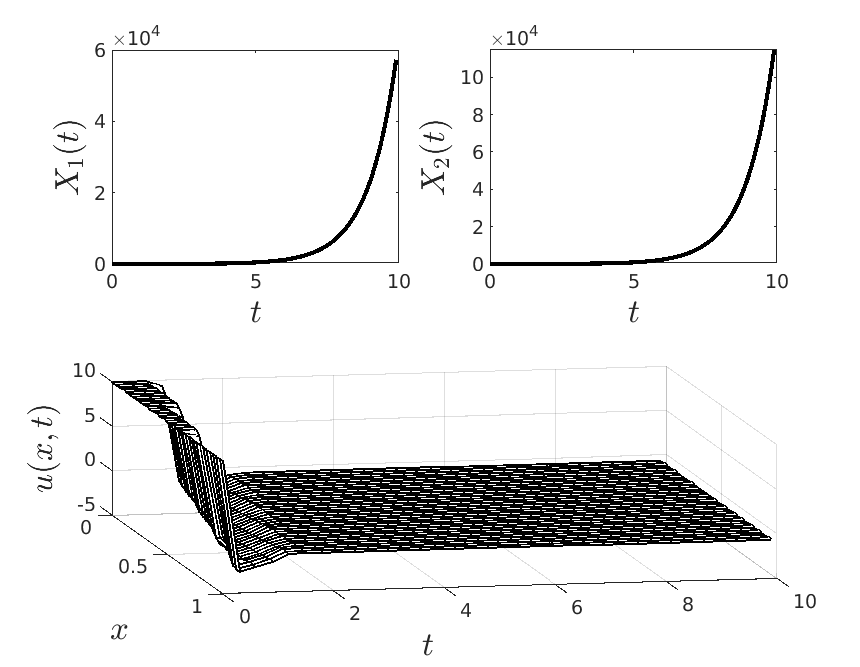} }
		\caption{Left: The norm $|X(t)| + \|u(t)\|_{\infty}$ of the closed-loop system \eqref{pde_representation}--\eqref{pde_representation1}, for $D = 1$, $M = 2$, $\Delta =\dfrac{M}{100}$, under the nominal feedback law
			$U(t)=\mu K\displaystyle\int_{0}^{D}e^{A(D-y)}Bq\left(\dfrac{u(y)}{\mu}\right)dy+\mu K e^{AD}\begin{bmatrix}
				q\left(\dfrac{X_1}{\mu}\right) \quad q\left(\dfrac{X_2}{\mu}\right)
			\end{bmatrix}^T,$
			for fixed $\mu = 0.1$ and $q$ defined in \eqref{quantizer_matrix}, \eqref{quantizer_simus}, with parameters $\overline{M}=0.6$, $\overline{M}_1 = 2$, $\Omega=0.63$, $T = 2$, and $\mu_0 = 1$. Right: The respective states of the closed-loop system.}
		\label{fixed_zoom_variable_small}
	}
\end{figure*}

To further elucidate the importance of the proposed
control design methodology, we present in Figures~\ref{fixed_zoom_variable_small} and \ref{fixed_zoom_variable_large}
the responses of the closed-loop systems when employing
the nominal controller, without compensating for the effects
of state measurements quantization. For a fixed, small value
of \(\mu\) (\(\mu = 0.1\)), we note that the states of the closed-loop
system grow unbounded under the nominal control law, as
depicted in Figure~\ref{fixed_zoom_variable_small}. This occurs because the initial condition lies outside the range of the quantizer, which is defined
as \(M\mu\), and thus, the quantizer and control input saturate.
\begin{figure*}[t]
	\centering{
		\subfigure{\includegraphics[width=1\columnwidth]{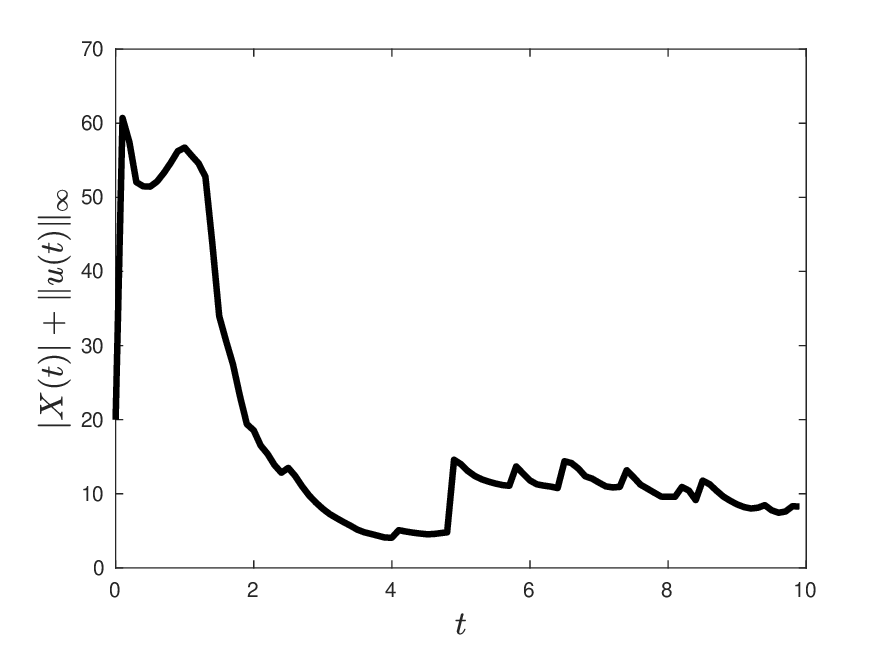} }
		\subfigure{\includegraphics[width=1.05\columnwidth]{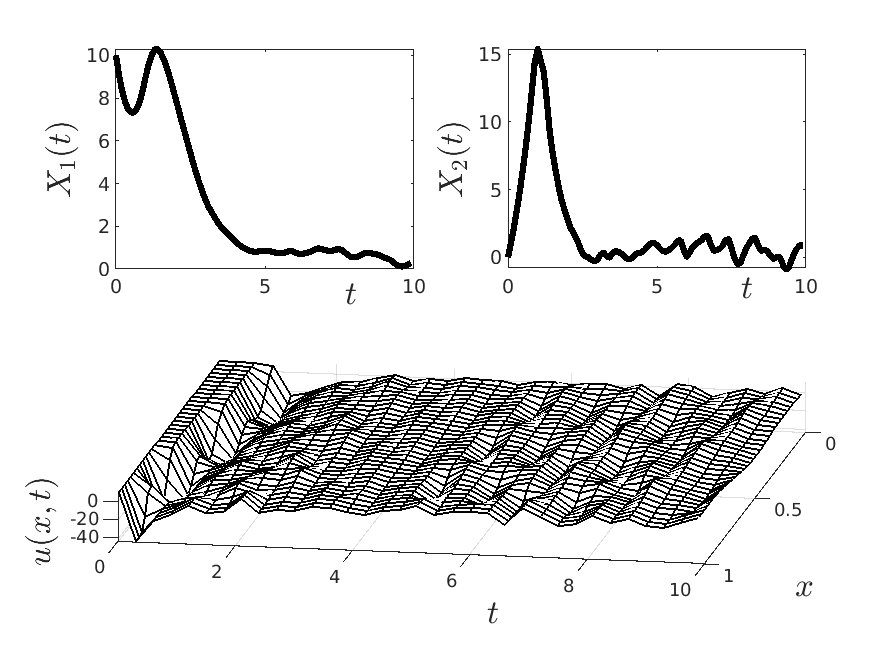} }
		\caption{Left: The norm $|X(t)| + \|u(t)\|_{\infty}$ of the closed-loop system \eqref{pde_representation}--\eqref{pde_representation1}, for $D = 1$, $M = 2$, $\Delta =\dfrac{M}{100}$, under the nominal feedback law
			$U=\mu K\displaystyle\int_{0}^{D}e^{A(D-y)}Bq\left(\dfrac{u(y)}{\mu}\right)dy+\mu K e^{AD}\begin{bmatrix}
				q\left(\dfrac{X_1}{\mu}\right) \quad q\left(\dfrac{X_2}{\mu}\right)
			\end{bmatrix}^T,$
			for fixed $\mu = 100$ and $q$ defined in \eqref{quantizer_matrix}, \eqref{quantizer_simus}, with parameters $\overline{M}=0.6$, $\overline{M}_1 = 2$, $\Omega=0.63$, $T = 2$, and $\mu_0 = 1$. Right: The respective states of the closed-loop system.}
		\label{fixed_zoom_variable_large}
	}
\end{figure*}
Conversely, for a fixed, large value of \(\mu\) (\(\mu = 100\)), the states
of the closed-loop system remain bounded under the nominal control law, owing to the input-to-state stability property of the nominal backstepping controller with respect to
a boundary disturbance, as the quantizer does not saturate.
However, achieving asymptotic stabilization is unattainable,
as illustrated in Figure~\ref{fixed_zoom_variable_large}, due to significant quantization error \(\Delta\mu\). In fact, the system appears to enter a limit cycle
because, due to the quantization effect, the control input becomes negligible when the state lies within a certain region
around zero, leading to state growth (since the open-loop system is unstable), until the quantizer switches to a non-zero
value. This behavior aligns with findings reported for finite-dimensional systems (see, e.g., \cite{liberzon2003hybrid,tarbouriech2011control}), hyperbolic systems
(as discussed in \cite{bekiaris2020hybrid,espitia2017stabilization,tanwani2016input}) and parabolic systems (see \cite{katz2022sampled}),
which do not explicitly aim to compensate for quantization
effects to achieve asymptotic stabilization.
%%%%%%%%%%%%%%%%%%%%%%%%%%%%%%%%%%%%%%%%%%%%%%%%% 
\section{Conclusions and Future Work}\label{conlusion}
Global asymptotic stability of linear systems with input delay and subject to both state and input quantization has been established, thanks to a switched predictor-feedback control law that we introduced. The proof strategy utilized the backstepping method along with small-gain and input-to-state stability arguments. Future works should incorporate state-dependent event-based switching mechanisms since the switching parameter in this paper operates as a time-dependent periodic mechanism. Moreover, consideration should be given to nonlinear time-delay systems, and other classes of PDEs, such as parabolic equations, and cascade parabolic-ODE systems, subject to quantization.	
\bibliographystyle{plain}
\bibliography{TDS_Quantizer_SCL}

\begin{thebibliography}{10}

\bibitem{artstein1982linear}
Z.~Artstein.
\newblock Linear systems with delayed controls: A reduction.
\newblock {\em IEEE Transactions on Automatic control}, 27(4):869--879, 1982.

\bibitem{battilotti2019continuous}
S.~Battilotti.
\newblock Continuous-time and sampled-data stabilizers for nonlinear systems
  with input and measurement delays.
\newblock {\em IEEE Transactions on Automatic Control}, 65(4):1568--1583, 2019.

\bibitem{bekiaris2020hybrid}
N.~Bekiaris-Liberis.
\newblock Hybrid boundary stabilization of linear first-order hyperbolic pdes
  despite almost quantized measurements and control input.
\newblock {\em Systems \& Control Letters}, 146:104809, 2020.

\bibitem{bekiaris2013nonlinear}
N.~Bekiaris-Liberis and M.~Krstic.
\newblock {\em Nonlinear Control Under Nonconstant Delays}.
\newblock SIAM, 2013.

\bibitem{bresch2014prediction}
D.~Bresch-Pietri, J.~Chauvin, and N.~Petit.
\newblock Prediction-based stabilization of linear systems subject to
  input-dependent input delay of integral-type.
\newblock {\em IEEE Transactions on Automatic Control}, 59(9):2385--2399, 2014.

\bibitem{brockett2000quantized}
R.~Brockett and D.~Liberzon.
\newblock Quantized feedback stabilization of linear systems.
\newblock {\em IEEE transactions on Automatic Control}, 45(7):1279--1289,
  (2000).

\bibitem{deng2022predictor}
Y.~Deng, V.~L{\'e}chapp{\'e}, E.~Moulay, Z.~Chen, B.~Liang, F.~Plestan, and
  Q.-L. Han.
\newblock Predictor-based control of time-delay systems: a survey.
\newblock {\em International Journal of Systems Science}, 53(12):2496--2534,
  2022.

\bibitem{di2019sampled}
M.~Di~Ferdinando and P.~Pepe.
\newblock Sampled-data emulation of dynamic output feedback controllers for
  nonlinear time-delay systems.
\newblock {\em Automatica}, 99:120--131, 2019.

\bibitem{di2020practical}
M.~Di~Ferdinando, P.~Pepe, and A.~Borri.
\newblock On practical stability preservation under fast sampling and accurate
  quantization of feedbacks for nonlinear time-delay systems.
\newblock {\em IEEE Transactions on Automatic Control}, 66(1):314--321, 2020.

\bibitem{di2022semi}
M.~Di~Ferdinando, P.~Pepe, and S.~Di~Gennaro.
\newblock On semi-global exponential stability under sampling for locally
  lipschitz time-delay systems.
\newblock {\em IEEE Transactions on Automatic Control}, 68(3):1508--1523, 2022.

\bibitem{espitia2017stabilization}
N.~Espitia, A.~Tanwani, and S.~Tarbouriech.
\newblock Stabilization of boundary controlled hyperbolic {PDEs} via
  {Lyapunov}-based event triggered sampling and quantization.
\newblock In {\em 2017 IEEE 56th Annual Conference on Decision and Control
  (CDC)}, pages 1266--1271, 2017.

\bibitem{gonzalez2019event}
A.~Gonz{\'a}lez, A.~Cuenca, V.~Balaguer, and P.~Garc{\'\i}a.
\newblock Event-triggered predictor-based control with gain-scheduling and
  extended state observer for networked control systems.
\newblock {\em Information Sciences}, 491:90--108, 2019.

\bibitem{karafyllis2011nonlinear}
I.~Karafyllis and M.~Krstic.
\newblock Nonlinear stabilization under sampled and delayed measurements, and
  with inputs subject to delay and zero-order hold.
\newblock {\em IEEE Transactions on Automatic Control}, 57(5):1141--1154, 2011.

\bibitem{karafyllis2017predictor}
I.~Karafyllis and M.~Krstic.
\newblock {\em Predictor Feedback for Delay Systems: Implementations and
  Approximations}.
\newblock Springer, 2017.

\bibitem{KKnonlocal}
I.~Karafyllis and M.~Krstic.
\newblock Sampled-data boundary feedback control of 1-d linear transport pdes
  with non-local terms.
\newblock {\em Systems \& Control Letters}, 107:68--75, 2017.

\bibitem{karafyllis2021input}
I.~Karafyllis and M.~Krstic.
\newblock {\em Input-to-State Stability for PDEs}.
\newblock Springer-Verlag, London (Series: Communications and Control
  Engineering), 2019.

\bibitem{katz2022sampled}
R.~Katz and E.~Fridman.
\newblock Sampled-data finite-dimensional boundary control of 1d parabolic pdes
  under point measurement via a novel iss halanay’s inequality.
\newblock {\em Automatica}, 135:109966, 2022.

\bibitem{krstic2009delay}
M.~Krstic.
\newblock {\em Delay Compensation for Nonlinear, Adaptive, and PDE Systems}.
\newblock Springer, 2009.

\bibitem{liberzon2003hybrid}
D.~Liberzon.
\newblock Hybrid feedback stabilization of systems with quantized signals.
\newblock {\em Automatica}, 39(9):1543--1554, 2003.

\bibitem{mazenc2016predictor}
F.~Mazenc and E.~Fridman.
\newblock Predictor-based sampled-data exponential stabilization through
  continuous--discrete observers.
\newblock {\em Automatica}, 63:74--81, 2016.

\bibitem{mazenc2022event}
F.~Mazenc, M.~Malisoff, and C.~Barbalata.
\newblock Event-triggered control for continuous-time linear systems with a
  delay in the input.
\newblock {\em Systems \& Control Letters}, 159:105075, 2022.

\bibitem{nozari2020event}
E.~Nozari, P.~Tallapragada, and J.~Cort{\'e}s.
\newblock Event-triggered stabilization of nonlinear systems with time-varying
  sensing and actuation delay.
\newblock {\em Automatica}, 113:108754, 2020.

\bibitem{schlanbusch2022attitude}
S.~M. Schlanbusch, O.-M. Aamo, and J.~Zhou.
\newblock Attitude control of a {2-DOF} helicopter system with input
  quantization and delay.
\newblock In {\em IECON 2022--48th Annual Conference of the IEEE Industrial
  Electronics Society}, pages 1--6.

\bibitem{selivanov2016observer}
A.~Selivanov and E.~Fridman.
\newblock Observer-based input-to-state stabilization of networked control
  systems with large uncertain delays.
\newblock {\em Automatica}, 74:63--70, 2016.

\bibitem{selivanov2016predictor}
A.~Selivanov and E.~Fridman.
\newblock Predictor-based networked control under uncertain transmission
  delays.
\newblock {\em Automatica}, 70:101--108, 2016.

\bibitem{sun2022predictor}
J.~Sun, J.~Yang, and Z.~Zeng.
\newblock Predictor-based periodic event-triggered control for nonlinear
  uncertain systems with input delay.
\newblock {\em Automatica}, 136:110055, 2022.

\bibitem{tanwani2016input}
A.~Tanwani, C.~Prieur, and S.~Tarbouriech.
\newblock Input-to-state stabilization in ${H}^1$-norm for boundary controlled
  linear hyperbolic pdes with application to quantized control.
\newblock In {\em 2016 IEEE 55th Conference on Decision and Control (CDC)},
  pages 3112--3117. IEEE, 2016.

\bibitem{tarbouriech2011control}
S.~Tarbouriech and F.~Gouaisbaut.
\newblock Control design for quantized linear systems with saturations.
\newblock {\em IEEE Transactions on Automatic Control}, 57(7):1883--1889, 2011.

\bibitem{weston2018sequential}
J.~Weston and M.~Malisoff.
\newblock Sequential predictors under time-varying feedback and measurement
  delays and sampling.
\newblock {\em IEEE Transactions on Automatic Control}, 64(7):2991--2996, 2018.

\end{thebibliography}
\end{document}